\documentclass{ifacconf}
\usepackage{comment}
\usepackage{subcaption}
\usepackage{graphicx}      
\usepackage{natbib}        

\usepackage{amsmath,amssymb,amsfonts,amsthm}
\usepackage{xcolor}
\usepackage{xurl}
\usepackage{bm, booktabs}
\usepackage{mathrsfs}
\usepackage{enumitem}
 \usepackage{etex}
 
\usepackage{lipsum}
\usepackage{amsfonts}
\usepackage{graphicx}
\usepackage{epstopdf} 
\usepackage{enumerate} 
 \usepackage{amsmath}  
 \usepackage{subfig,bbm,enumerate,multirow,framed,blkarray}

\usepackage{verbatim,tcolorbox}
  \usepackage{tikz}
\usetikzlibrary{shapes,arrows}
\usetikzlibrary{positioning}  
\usepackage{pstricks-add} 
 \usepackage{wrapfig}
 \usepackage{pgfplots}
  
\usepackage{color}
\usepackage{savesym}
 \usepackage{algpseudocode} 

 \usepackage{graphicx}          




\newtheorem{theorem}{Theorem}[section] 
\newtheorem{corollary}[theorem]{Corollary}
\newtheorem{lemma}[theorem]{Lemma}

\newtheorem{remark}[theorem]{Remark}



	\algnewcommand\And{\textbf{and }}
	\algnewcommand\Or{\textbf{or }}


\algnewcommand{\algorithmicgoto}{\textbf{go to}}%
\algnewcommand{\Goto}[1]{\algorithmicgoto~\ref{#1}}%

\begin{document}
\begin{frontmatter}

\title{Model-Free Optimization and Control of Rigid Body Dynamics: An Extremum Seeking for Vibrational Stabilization Approach}


\author[First]{Rohan Palanikumar} 
\author[First]{Ahmed A. Elgohary} 
\author[First]{Simone Martini}
\author[First]{Sameh A. Eisa}

\address[First]{Department of Aerospace Engineering and Engineering Mechanics,
University of Cincinnati, Cincinnati, OH 45221 USA (palanire@mail.uc.edu, elgohaam@mail.uc.edu, marti6so@ucmail.uc.edu, eisash@ucmail.uc.edu).}

\begin{abstract}
In this paper, we introduce a model-free, real-time, dynamic optimization and control method for a class of rigid body dynamics. Our method is based on a recent extremum seeking control for vibrational stabilization (ESC-VS) approach that is applicable to a class of second-order mechanical systems. The new ESC-VS method is able to stabilize a rigid body dynamic system about the optimal state of an objective function that can be unknown expression-wise, but assessable through measurements; the ESC-VS is operable by using \textit{only} one perturbation/vibrational signal. 
We demonstrate the effectiveness and the applicability of our ESC-VS approach via three rigid-body systems: (1) satellite attitude dynamics, (2) quadcopter attitude dynamics, and (3) acceleration-controlled unicycle dynamics. The results, including simulations with and without measurement delays/noise, illustrate the ability of our ESC-VS to operate successfully as a new methodology of optimization and control for rigid body dynamics. 
\end{abstract}

\begin{keyword}
Rigid Body Dynamics, Extremum Seeking, Vibrational Stabilization, Second-Order Systems, Mechanical Systems, Euler-Lagrange Mechanics, Model-Free Optimization, Satellite Attitude, Quadcopter
Attitude, Unicycle Dynamics.  
\end{keyword}

\end{frontmatter}
\section{Introduction}
Many systems across science and engineering fields, such as physics, bio-mechanics, and mechanical/aerospace/robotic engineering, are modeled and described using rigid body dynamics representation that usually arises from Newton's second law and/or Euler-Lagrange formulations (\cite{ardema2005analytical,ginsberg2008engineering,chaturvedi2011rigid}). That is, rigid body dynamic systems are typically studied and analyzed as being some classes of second-order mechanical systems. For decades, researchers have tried to propose and design control techniques and methods that can steer or optimize said mechanical systems, including rigid body dynamics, to achieve a desired objective. This includes many traditional optimization and control techniques, such as, but not limited to PID, MPC, LQR, geometric methods, among others; see, for example, \cite{posa2013direct,chaturvedi2011rigid,choi2004pid,moreno2018motion,golestani2022simple,silva2006lqr,bullo2019geometric}.

In relatively more recent literature, a new model-free optimization and control approach emerged for some classes of mechanical systems, namely the extremum seeking control (ESC) approach (\cite{suttner2022extremum,grushkovskaya2021extremum,eisa2023analyzing,suttner2024attitude}). ESC systems are feedback-based adaptive controllers that drive a dynamical system to an extremum (i.e., minimum or maximum) of an objective function. A primary advantage of these ESC systems is the ability to operate in real-time and model-free fashion when the mathematical model of the objective function is unknown and only measurements are available. ESC systems operate by employing periodic perturbation signals on the system, then use feedback laws involving the measured objective function to adapt the control input and steer the system to the extremum autonomously. For a review on many ESC techniques and analysis, see \cite{scheinker2024100,pokhrel2023higher}. Roughly speaking, ESC methods can be divided between two groups based on the nature of the perturbation signals(s). First group are those that require small amplitude, high frequency perturbation signals, such as classic ESC methods/techniques (e.g., \cite{ariyur2003real,KRSTICMain,guay2015time}). Second group, which is the most relevant to this paper, are those that require high amplitude, high frequency perturbation signals and applicable to mechanical systems, including rigid body dynamics (e.g., \cite{suttner2022extremum,grushkovskaya2021extremum,suttner2024attitude}).    

\textbf{Motivation and Contributions.} Inspired by flapping insects and hummingbirds mechanics, which achieve stable hovering using \textit{only} one high amplitude, high frequency perturbation signal, that is the flapping action of the wings itself, a new ESC approach was developed for that particular bio-physical phenomenon (\cite{enatural_hovering2024}). Said new ESC approach was then generalized, analyzed and introduced effectively to a class of second-order mechanical systems (\cite{elgohary2025letters}) where the ESC itself acts as a feedback adjuster/controller of what otherwise will be open-loop vibrational stabilization; hence, the approach in \cite{elgohary2025letters} is called extremum seeking for controlled vibrational stabilization (ESC-VS). As discussed in \cite{elgohary2025letters}, the ESC-VS approach had many advantages over ESC on mechanical systems literature such as \cite{suttner2022extremum}. In particular, ESC-VS can admit generalized forces that are quadratic in velocities, which is broader than what is available in ESC on mechanical systems literature and enables it to include more complex forces such as drag and aerodynamics. Additionally, ESC-VS operates by using \textit{only} one perturbation/vibrational signal as opposed to at least two perturbation signals in ESC on mechanical systems literature, which suits better the nature of some systems that have one source of vibration or need to save vibrational energy. For all the aforementioned reasons, we aim at introducing a novel framework of ESC-VS for a class of rigid body dynamics. We provide the formulation of the ESC-VS for rigid body approach and analyze it. Then, we put it into test by applying it to three different important rigid body applications, namely (1) satellite attitude dynamics, (2) quadcopter
attitude dynamics, and (3) acceleration-controlled unicycle dynamics. Results, including simulations with and without measurement delays/noise, illustrate the ability of the ESC-VS approach for rigid body to stabilize in real-time said systems successfully about the extremum of an objective function using, for the first time in literature, one vibrational signal.     

\section{Background on Extremum
Seeking for Vibrational Stabilization (ESC-VS)}\label{sec:background}
We start by considering a general class of second-order mechanical systems of the following form (\cite{elgohary2025letters}):
\begin{equation}
    \ddot{\mathbf{q}} = \mathbf{f(q,\dot{q})} + \boldsymbol{u}
    \label{eqn:system_model}
\end{equation}
where $\mathbf{q}, \mathbf{\dot{q}} \in \mathbb{R}^n$ are the generalized coordinates and velocities, respectively, of the system. The term \( \mathbf{f}(\mathbf{q}, \dot{\mathbf{q}}) = [f_1, f_2, \ldots, f_n]^T \in \mathbb{R}^n \) represents the drift in the system and can include nonlinear and damping effects. The vector $\boldsymbol{u} \in \mathbb{R}^n$ represents the control input to the system. An extremum seeking control for vibrational stabilization (ESC-VS), inspired by the flapping-based stabilization behavior observed in insects, is employed to stabilize the system around a desired configuration. The ESC-VS loop is designed using a single oscillatory signal to drive the second-order mechanical system \eqref{eqn:system_model} to the optimum of an objective function $J(\mathbf{q})$. This ESC-VS scheme can be written in a first-order state-space form as follows:
\begin{equation}
\frac{d}{dt}
\underbrace{\begin{bmatrix}
\mathbf{q}_{n \times 1} \\
\dot{\mathbf{q}}_{n \times 1} \\
\hat{u}
\end{bmatrix}}_{{\mathbf{x}}}
=
\underbrace{\begin{bmatrix}
\dot{\mathbf{q}}_{n \times 1} \\
\mathbf{f}(\mathbf{q}, \dot{\mathbf{q}}) + \mathbf{C} \hat{u} \\
0
 \end{bmatrix}}_{\mathbf{Z}(\mathbf{x})}
+
\underbrace{\begin{bmatrix}
\mathbf{0}_{n \times 1} \\
\mathbf{A} \\
k J(\mathbf{q})
\end{bmatrix} \omega \cos(\omega t)}_{\mathbf{Y}(\mathbf{x}, t)},
\label{eq:proposed_system_dynamics}
\end{equation}
where \( \mathbf{x} \in \mathbb{R}^{2n+1} \) is the state vector that contains the generalized coordinates, the generalized velocity, and the control estimate variable \( \hat{u} \). The term \( J(\mathbf{q}) \) is the objective function to be minimized. The scalar \( k > 0 \) denotes a learning gain and \( \omega \) is the frequency of the perturbation signal. Vectors \( \mathbf{C} = [c_1, \ldots, c_n]^T \in \mathbb{R}^n \) and \( \mathbf{A} = [a_1, \ldots, a_n]^T \in \mathbb{R}^n \) are positive design constants that define how the control estimate, \( \hat{u} \), and the sinusoidal input couple into the control law. We define a closed domain \( \mathscr{C}_0 \subset \mathbb{R}^{2n+1} \) and an open neighborhood \( \mathscr{C} \supset \mathscr{C}_0 \), and assume the following conditions hold throughout this region:

\begin{itemize}
    \item \textbf{A1.} The vector fields \( \mathbf{Z}(\mathbf{x}) \) and \( \mathbf{Y}(\mathbf{x}, t) \), along with their derivatives up to third order, are continuous and bounded in \( \mathscr{C}_0 \).
    
    \item \textbf{A2.} The drift term \( \mathbf{f}(\mathbf{q}, \dot{\mathbf{q}}) \) is quadratic in velocity, meaning that its third derivative with respect to \( \dot{\mathbf{q}} \) vanishes.
    
    \item \textbf{A3.} The objective function \( J(\mathbf{q}) \) is smooth and admits an isolated minimizer. That is, there exists \( \mathbf{q}^* \in \mathscr{C}_0 \) such that \( \nabla J(\mathbf{q}^*) = \mathbf{0} \), and \( \nabla J(\mathbf{q}) \neq \mathbf{0} \) for all \( \mathbf{q} \in \mathscr{C}_0 \setminus \{\mathbf{q}^*\} \). Furthermore, there exist class-\(\mathcal{K}\) functions \( \alpha_1 \) and \( \alpha_2 \) such that
    \[
    0 < \alpha_1(\|\mathbf{q} - \mathbf{q}^*\|) \leq J(\mathbf{q}) - J(\mathbf{q}^*) \leq \alpha_2(\|\mathbf{q} - \mathbf{q}^*\|).
    \]

    \item \textbf{A4.} The perturbation vector field \( \mathbf{Y}(\mathbf{x}, t) \) is time-periodic with period \( T = 2\pi/\omega \), and has zero mean over one period, i.e.,
    \[
    \int_{0}^{T} \mathbf{Y}(\mathbf{x}, t)\, dt = \mathbf{0}.
    \]
\end{itemize}

\begin{remark}
     Assumptions A1 and A4 are common in ESC and vibrational stabilization theory, ensuring regularity and periodicity of the system dynamics. Assumption A2 reflects the structural property that many physical systems exhibit forces that are at most quadratic in velocity, capturing effects such as aerodynamic drag or mechanical damping. Assumption A3 is standard in ESC literature and encodes convexity or convexity-like behavior near the optimum, ensuring a well defined minimization objective (\cite{VectorFieldGRUSHKOVSKAYA2018,grushkovskaya2021extremum}).
     \end{remark}

The ESC-VS system described in~\eqref{eq:proposed_system_dynamics} belongs to a class of mechanical systems that can be analyzed using the Variation-of-Constant (VOC) averaging approach, as seen in~\cite{bullo2002averaging,Maggia2020higherOrderAvg}. In our recent work~\cite{elgohary2025letters}, this framework was used to decompose the time-varying dynamics of the ESC-VS system into fast and slow subsystems. Through Lie bracket expansions and averaging techniques, we derived an equivalent averaged system that accurately captures the long-term behavior of the original ESC-VS dynamics \eqref{eq:proposed_system_dynamics}. In particular, we provided the following VOC-averaged system: 
\begin{equation}
    \dot{\bar{\mathbf{x}}} = \underbrace{\begin{bmatrix}
        \dot{\bar{\mathbf{q}}}_{n\times1} \\ \mathbf{f}(\bar{\mathbf{q}}, \dot{\bar{\mathbf{q}}}) + \mathbf{C}\hat{\bar{u}} \\ 0
    \end{bmatrix}}_{\mathbf{Z}(\bar{\mathbf{x}})}
    + \frac{1}{4} \begin{bmatrix}
        \mathbf{0}_{n\times1} \\ [\mathbf{M}_{22}]_{n\times n} [\mathbf{A}]_{n\times1} \\ -2k[\nabla_{\bar{\mathbf{q}}}J(\bar{\mathbf{q}})]_{1\times n} [\mathbf{A}]_{n\times1}
    \end{bmatrix},
    \label{eq:voc_averaged_system}
\end{equation}
where \(\mathbf{M}_{22}\) is an $n \times n$ matrix representing second-order mixed derivatives such that for any row index $i=1,...,n$ and column index $k=1,...,n$ of $\mathbf{M}_{22}$, we have $\mathbf{M}_{22}|_{(i,k)}=\sum_{j=1}^{n} \frac{\partial^2 f_i}{\partial \dot{q}_k \partial \dot{q}_j} a_j$, and \( \nabla_{\mathbf{q}} J(\mathbf{q}) \) denotes the gradient of \( J(\mathbf{q}) \) such that \( \nabla_{\mathbf{q}} J(\mathbf{q}) =
\begin{bmatrix} 
\frac{\partial J}{\partial q_1} & \frac{\partial J}{\partial q_2} & \dots & \frac{\partial J}{\partial q_n} 
\end{bmatrix}_{1 \times n} \).
\begin{remark}
    The trajectories of the ESC-VS system \eqref{eq:proposed_system_dynamics} remains \( \mathcal{O}(1/\omega) \)-close to the averaged system \eqref{eq:voc_averaged_system} for sufficiently high perturbation frequency \( \omega \) as in \cite[Corollary 1]{elgohary2025letters}.
\end{remark}
\begin{corollary}
    \cite[Corollary 1]{elgohary2025letters} If $\bar{\mathbf{x}}(t) \in \mathscr{C}_0, \forall t \in [0,\ \omega t_{f}]$ with $t_f > 0$ and $\bar{\mathbf{x}}(0)=\mathbf{x}(0)$, we have $|\mathbf{x}(t)-\bar{\mathbf{x}}(t)|=O(1/\omega)$ for $t \in [0,\ \omega t_{f}]$.
\end{corollary}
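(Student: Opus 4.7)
The plan is to establish the $O(1/\omega)$ closeness via a variation-of-constants (VOC) averaging argument in the spirit of Bullo and Sarychev, which is the natural tool for systems whose perturbation carries amplitude that scales with the frequency, as in the term $\mathbf{Y}(\mathbf{x},t)\,\omega\cos(\omega t)$ appearing in \eqref{eq:proposed_system_dynamics}. Introduce the small parameter $\varepsilon = 1/\omega$ and define a time-periodic near-identity change of coordinates $\mathbf{x} = \boldsymbol{\Phi}(\tilde{\mathbf{x}},t)$ in which the antiderivative of the oscillatory signal enters, i.e.\ one chooses $\boldsymbol{\Phi}$ so that its $t$-derivative cancels the leading oscillatory component. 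Assumption \textbf{A4} is what makes this construction possible: the zero-mean condition lets us define a bounded $2\pi/\omega$-periodic primitive, and \textbf{A1} guarantees that $\boldsymbol{\Phi}$ together with its derivatives up to third order is $C^1$ and uniformly bounded on $\mathscr{C}_0$.

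Next, I would expand the pullback vector field in powers of $\varepsilon$ using Lie bracket/Magnus-type computations, identifying the $\varepsilon^0$ term as exactly the sum of the drift $\mathbf{Z}(\bar{\mathbf{x}})$ and the symmetric product correction $\tfrac{1}{4}\bigl[\mathbf{M}_{22}\mathbf{A};\,-2k(\nabla_{\bar{\mathbf{q}}} J)\mathbf{A}\bigr]$ that appears on the right-hand side of \eqref{eq:voc_averaged_system}. This identification is where Assumption \textbf{A2} is critical: because $\mathbf{f}(\mathbf{q},\dot{\mathbf{q}})$ is at most quadratic in $\dot{\mathbf{q}}$, the relevant Lie bracket expansion terminates at second order, so the $\mathbf{M}_{22}$-contribution is exact rather than merely leading, and no higher-order residual spoils the averaged dynamics. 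The $-2k\nabla J\cdot\mathbf{A}$ contribution in the $\hat u$-component comes from the bracket between the $kJ(\mathbf{q})$-carrying oscillatory direction and the $\dot{\mathbf{q}}$-component of $\mathbf{Z}$, invoking smoothness of $J$ from \textbf{A3}.

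With the averaged vector field isolated and the remainder shown to be $\mathcal{O}(\varepsilon)=\mathcal{O}(1/\omega)$ uniformly on $\mathscr{C}_0$, I close the estimate by a Gronwall comparison on $e(t)=\mathbf{x}(t)-\bar{\mathbf{x}}(t)$. Using the shared initial condition $\bar{\mathbf{x}}(0)=\mathbf{x}(0)$, the Lipschitz property of the averaged field (again from \textbf{A1}) on the compact set $\mathscr{C}_0$, and the residency hypothesis $\bar{\mathbf{x}}(t)\in\mathscr{C}_0$ over $[0,\omega t_f]$, one obtains $|e(t)|\le C\varepsilon\exp(L\,\omega t_f)\cdot\varepsilon$ after the appropriate time rescaling $\tau=\omega t$, which collapses to $|e(t)|=\mathcal{O}(1/\omega)$ on the stretched interval. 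The main obstacle, and the reason this does not reduce to textbook first-order averaging, is that the perturbation amplitude grows with $\omega$; the delicate step is therefore verifying that after the VOC coordinate change the high-frequency, high-amplitude contributions indeed combine to produce the finite averaged correction rather than an $\mathcal{O}(\omega)$ blow-up in the remainder, and this is precisely where \textbf{A2} (truncation of brackets) and the compactness of $\mathscr{C}_0$ are jointly exploited.
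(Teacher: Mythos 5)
The first thing to note is that the paper does not prove this statement at all: it is imported verbatim as \cite[Corollary 1]{elgohary2025letters}, and the only ``proof'' in the present manuscript is the citation itself. So there is no in-paper argument to compare against; what you have written is a reconstruction of the proof that lives in the cited reference. Your overall strategy --- a near-identity, time-periodic VOC change of coordinates built from the bounded primitive of the zero-mean oscillation (A4), a Lie-bracket/symmetric-product expansion whose truncation at second order is guaranteed by the quadratic-in-velocity structure of $\mathbf{f}$ (A2), identification of the averaged field with the right-hand side of \eqref{eq:voc_averaged_system}, and a Gronwall closure --- is exactly the methodology the paper attributes to \cite{elgohary2025letters} (``Variation-of-Constant (VOC) averaging \dots Through Lie bracket expansions and averaging techniques''), so in spirit you are on the same track as the source.

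There is, however, a genuine gap in your final step. You write $|e(t)|\le C\varepsilon\exp(L\,\omega t_f)\cdot\varepsilon$ and claim this ``collapses to $\mathcal{O}(1/\omega)$'' after the rescaling $\tau=\omega t$. It does not: with $\varepsilon=1/\omega$ the factor $\varepsilon^{2}e^{L\omega t_f}=\omega^{-2}e^{L\omega t_f}$ diverges as $\omega\to\infty$, so the bound proves nothing on the interval $[0,\omega t_f]$, whose length itself grows with $\omega$. Moreover the substitution $\tau=\omega t$ goes the wrong way --- it stretches $[0,\omega t_f]$ to $[0,\omega^{2}t_f]$ rather than compressing it. A correct closure must either (i) pass to a slow time in which the residency interval becomes $O(1)$ and the Lipschitz constant of the averaged field enters Gronwall as a fixed, $\omega$-independent exponent, or (ii) exploit additional structure of the averaged system (e.g.\ the Lyapunov decrease from \cite[Proposition II.5]{elgohary2025letters}) to prevent error accumulation over the growing window, as is standard when extending averaging estimates beyond $O(1)$ horizons. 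As written, the naive Gronwall comparison on $[0,\omega t_f]$ is precisely the step that fails, and it is the one place where your sketch cannot be repaired by bookkeeping alone.
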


From a practical perspective, several important design insights also follow from the VOC-averaging analysis in~\cite{elgohary2025letters}. First, the excitation frequency \( \omega \) must be selected sufficiently large to ensure time-scale separation. Second, the learning rate \( k \) controls the responsiveness of the control adaptation and must be tuned based on the sensitivity of the objective function \( J(\mathbf{q}) \). Third, the feedback gain vector \( \bm{C} \) should be chosen larger than the oscillatory gain vector \( \bm{A} \) to ensure that the adaptive control estimate dominates the vibrational input. 

\section{Main Results: ESC-VS Rigid Body System}\label{sec:main}
We present the primary contribution of this paper, which is the extension of the new ESC-VS approach in \cite{elgohary2025letters} to a general class of rigid body systems. We can consider the rotational and translational rigid body dynamics to be \eqref{eq:rigid_body_rot_dyn} and \eqref{eq:rigid_body_tran_dyn}, respectively:

\begin{align}
    \mathbb{I}\bm{\dot{\Omega}} &= -(\bm{\Omega} \times \mathbb{I}\bm{\Omega}) + \bm{\tau}
    \label{eq:rigid_body_rot_dyn}\\
    \mathbb{M}\bm{\dot{v}} &= -(\bm{\Omega} \times \bm{v})+\bm{F}
    \label{eq:rigid_body_tran_dyn}
\end{align}

where $\bm{\dot{\Omega}}, \bm{\Omega} \in \mathbb{R}^3$ are the body-frame angular acceleration and velocity of the rigid body, respectively, $\mathbb{I} = diag(\begin{bmatrix}
    I_{xx} &I_{yy} &I_{zz}
\end{bmatrix}) \in \mathbb{R}^{3\times 3}$ is the diagonal matrix of inertia, $\bm{\tau}\in \mathbb{R}^{3}$ is the vector of external torques, $\bm{\dot{v}}, \bm{v} \in \mathbb{R}^3$ are the body-frame translational acceleration and velocity of the rigid body, respectively, $\mathbb{M}=mI_{3}\in \mathbb{R}^{3\times 3}$ is the mass matrix with $I_{3}$ being the $3\times 3$ identity matrix, and $\bm{F}\in \mathbb{R}^{3}$ is the vector of external forces. Isolating the terms, $\bm{\dot{\Omega}}$ and $\bm{\dot{v}}$, produces effectively second order terms comparable to $\ddot{\mathbf{q}}$. Furthermore, we impose the following assumption:
\begin{itemize}
\item \textbf{A5.} We assume that all inertial matrices $\mathbb{I}$ and $\mathbb{M}$ are constant and diagonal, meaning that all products of inertia terms are considered to be zero.
\end{itemize}
\begin{lemma}\label{lemma}
    If assumptions \textbf{A1}-\textbf{A5} are satisfied, then the rigid body dynamical system \eqref{eq:rigid_body_rot_dyn}-\eqref{eq:rigid_body_tran_dyn} is a member of the class of second-order mechanical systems \eqref{eqn:system_model}.
\end{lemma}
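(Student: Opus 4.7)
The plan is a direct verification: under A5, I would rearrange the rigid body equations into the form \eqref{eqn:system_model}, and then confirm that the resulting drift satisfies the quadratic-in-velocity property A2. The other assumptions A1, A3, A4 are either structural regularity statements or properties of the objective/perturbation and so do not need to be re-derived here; they are supplied by the ESC-VS framework itself.

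First, I would invoke A5 to conclude that $\mathbb{I}$ and $\mathbb{M}$ are constant, diagonal, and in particular invertible. Pre-multiplying \eqref{eq:rigid_body_rot_dyn} by $\mathbb{I}^{-1}$ and \eqref{eq:rigid_body_tran_dyn} by $\mathbb{M}^{-1}$ isolates the accelerations $\dot{\boldsymbol{\Omega}}$ and $\dot{\mathbf{v}}$. Taking the stacked generalized velocity $\dot{\mathbf{q}} := (\boldsymbol{\Omega},\mathbf{v})\in\mathbb{R}^{6}$, so that $\ddot{\mathbf{q}}=(\dot{\boldsymbol{\Omega}},\dot{\mathbf{v}})$, the combined dynamics take exactly the shape of \eqref{eqn:system_model} with $n=6$, drift $\mathbf{f}(\mathbf{q},\dot{\mathbf{q}})=\bigl(-\mathbb{I}^{-1}(\boldsymbol{\Omega}\times\mathbb{I}\boldsymbol{\Omega}),\,-\mathbb{M}^{-1}(\boldsymbol{\Omega}\times\mathbf{v})\bigr)$, and input $\boldsymbol{u}=(\mathbb{I}^{-1}\boldsymbol{\tau},\mathbb{M}^{-1}\mathbf{F})$. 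This is the main embedding step.

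Next I would verify A2. Because $\mathbb{I}$ is diagonal under A5, each component of $\boldsymbol{\Omega}\times\mathbb{I}\boldsymbol{\Omega}$ reduces to a constant multiple of a product $\Omega_j\Omega_k$ of two angular-rate components; likewise $\boldsymbol{\Omega}\times\mathbf{v}$ is bilinear in the pair $(\boldsymbol{\Omega},\mathbf{v})$. Thus every entry of $\mathbf{f}$ is a homogeneous polynomial of degree two in $\dot{\mathbf{q}}$, and pre-multiplication by the constant matrices $\mathbb{I}^{-1},\mathbb{M}^{-1}$ preserves this degree. Consequently, all third partial derivatives of $\mathbf{f}$ with respect to $\dot{\mathbf{q}}$ vanish identically, which is exactly A2. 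Smoothness and boundedness on $\mathscr{C}_0$ (A1) follow because the drift is polynomial, and because $\mathscr{C}_0$ is closed and hence the polynomial drift together with the cosine perturbation have bounded derivatives of every order on it.

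The step I expect to require the most care, more as an interpretive matter than as a technical obstacle, is the identification of $\dot{\mathbf{q}}$ with the body-frame quasi-velocities $(\boldsymbol{\Omega},\mathbf{v})$, since $\boldsymbol{\Omega}$ is not globally the time derivative of any attitude chart. I would resolve this by stating the equivalence at the dynamical level, where $(\dot{\boldsymbol{\Omega}},\dot{\mathbf{v}})$ plays the role of $\ddot{\mathbf{q}}$ in \eqref{eqn:system_model}; any specific attitude parameterization chosen downstream (Euler angles, MRPs, etc.) contributes only an invertible, coordinate-smooth kinematic relation $\dot{\mathbf{q}}_{\text{chart}}=T(\mathbf{q}_{\text{chart}})\boldsymbol{\Omega}$ that leaves the quadratic-in-velocity structure of $\mathbf{f}$ intact on a neighborhood of the equilibrium of interest, so Lemma~\ref{lemma} applies in each of the three applications considered afterward.
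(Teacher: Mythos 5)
Your proposal is correct and follows essentially the same route as the paper's proof: invoke A5 so that $\mathbb{I}$ and $\mathbb{M}$ are constant, diagonal, and invertible, premultiply by their inverses, stack $(\boldsymbol{\Omega},\mathbf{v})$ as $\dot{\mathbf{q}}$, and read off the drift and input to match \eqref{eqn:system_model}. Your explicit check that the cross-product drift is homogeneous of degree two in the velocities (so A2 holds) and your remark on identifying $\dot{\mathbf{q}}$ with body-frame quasi-velocities are welcome clarifications that the paper handles only implicitly (the latter via the assumed smooth map $\mathbf{Q}=\varphi(\mathbf{q})$ in the applications), but they do not change the argument.
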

\begin{proof}
Consider the control input based on \cite{elgohary2025letters}:
\begin{equation}\label{eq:control}
    \bm{u} = \bm{C}{\hat{u}} + \bm{A}\omega\cos{(\omega t)},
\end{equation}
with vector of control matrices
\begin{equation}
    \bm{C} = \begin{bmatrix} c_1 & c_2 & c_3 & c_4 & c_5 & c_6 \end{bmatrix}^{\top},
    \bm{A} = \begin{bmatrix} a_1 & a_2 & a_3 & a_4 & a_5 & a_6 \end{bmatrix}^{\top}
    \label{eq:control_law}
\end{equation}
where
\begin{equation}
    c_i = \frac{c'_{i}}{b_i},\quad a_i = \frac{a'_{i}}{b_i}, \quad \forall i\in[1,6]\cap \mathbb{Z}^+,
\end{equation}
with $c'_{i}$ and $a'_{i}$ are control parameters and $b_i$ being the nonzero constant diagonal term of the matrix $B = diag(\mathbb{M}, \; \mathbb{I}) = diag(\begin{bmatrix}
    {b_1} &{b_2}  &{b_3}  &{b_4}  &{b_5}  &{b_6}
\end{bmatrix}) \in \mathbb{R}^{6\times 6}$ which groups the system's inertial components. Defining  the input vector, $\bm{u} = \bm{B}^{-1}\bm{u}' \in \mathbb{R}^{6 \times 1}$, with $u' = \begin{bmatrix}
    \bm{\tau} & \bm{F}
\end{bmatrix}^{\top}$ being the physical input vector, system \eqref{eq:rigid_body_rot_dyn} and \eqref{eq:rigid_body_tran_dyn}  can be rewritten as: 
\begin{equation}
    \underbrace{\begin{bmatrix}
        \bm{\dot{\Omega}}\\\bm{\dot{v}}
    \end{bmatrix}}_{\ddot{\mathbf{q}}} = \underbrace{\begin{bmatrix}
        -\mathbb{I}^{-1}(\bm{\Omega} \times \mathbb{I}\bm{\Omega})\\-\mathbb{M}^{-1}(\bm{\Omega} \times \bm{v})
    \end{bmatrix}}_{\mathbf{f(q,\dot{q})}} + \underbrace{\underbrace{\begin{bmatrix}
        \mathbb{I}^{-1} & \bm{0}_{3}\\
        \bm{0}_{3}&\mathbb{M}^{-1}
    \end{bmatrix}}_{\bm{B}^{-1}}\underbrace{\begin{bmatrix}
        \bm{\tau}\\\bm{F}
    \end{bmatrix}}_{\boldsymbol{u}'} }_{\bm{u}}.
    \label{eq:rigid_body_decomp}
\end{equation}
The rigid body dynamics \eqref{eq:rigid_body_decomp} is clearly in the form \eqref{eqn:system_model}.    
\end{proof}
\begin{remark}\label{remark2}
    We note that the constant inertial terms, $\bm{B}$, are lumped in the control matrices to be tuned, $\bm{C}$ and $\bm{A}$. This means that the ESC control law \eqref{eq:control} is still model-free as no need for accessing the information of $\bm{B}$ in practice.
\end{remark}
\begin{corollary}\label{corollary}
    Supposing 
    \begin{equation}\label{eq:law_hat}
        \dot{\hat{u}}=k J(\mathbf{q})\omega\cos{(\omega t)},
    \end{equation} applying the control input \eqref{eq:control} to the rigid body dynamics \eqref{eq:rigid_body_rot_dyn}-\eqref{eq:rigid_body_tran_dyn} (equivalent, per Lemma \ref{lemma}, to \eqref{eq:rigid_body_decomp}), leads to the ESC-VS rigid body dynamics being in the form \eqref{eq:proposed_system_dynamics} and its VOC-averaged system being in the form of \eqref{eq:voc_averaged_system}.
\end{corollary}
\begin{lemma}\label{lemma2}
    If assumptions \textbf{A1}-\textbf{A5} are satisfied and Corollary \ref{corollary} is in place, suppose $\bar{\textbf{x}}^* =\begin{bmatrix}
    \bar{\textbf{q}}^*& \bm{0} & \hat{\bar{u}}^*
\end{bmatrix}^{\top} \in \mathscr{C}_0,$ is an equilibrium point for the VOC-averaged system \eqref{eq:voc_averaged_system}, then $\bar{\textbf{x}}^*$ is an asymptotically locally uniformly stable equilibrium point.
\end{lemma}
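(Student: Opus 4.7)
My plan is to prove asymptotic stability of $\bar{\mathbf{x}}^*$ for the autonomous VOC-averaged system \eqref{eq:voc_averaged_system} via Lyapunov's indirect method, with a Lyapunov direct fallback. First I would unpack the equilibrium conditions by setting $\dot{\bar{\mathbf{x}}} = \mathbf{0}$ in \eqref{eq:voc_averaged_system}: the first block yields $\dot{\bar{\mathbf{q}}}^* = \mathbf{0}$ (consistent with the stated form), the second block reduces to the algebraic balance $\mathbf{f}(\bar{\mathbf{q}}^*, \mathbf{0}) + \mathbf{C}\hat{\bar{u}}^* + \tfrac{1}{4}\mathbf{M}_{22}\mathbf{A} = \mathbf{0}$ which determines $\hat{\bar{u}}^*$ once $\bar{\mathbf{q}}^*$ is fixed, and the third block enforces $\nabla J(\bar{\mathbf{q}}^*)^{\top}\mathbf{A} = 0$. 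Combined with \textbf{A3} (isolated minimizer with class-$\mathcal{K}$ sandwich on $J-J(\mathbf{q}^*)$), this localizes $\bar{\mathbf{q}}^*$ at the strict local minimizer $\mathbf{q}^*$, at which the Hessian $H := \mathrm{Hess}\,J(\bar{\mathbf{q}}^*)$ is positive definite.

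Next I would compute the Jacobian $J_{\text{eq}}$ of the averaged vector field at $\bar{\mathbf{x}}^*$. By \textbf{A2}, the entries of $\mathbf{M}_{22}$ are independent of $\dot{\bar{\mathbf{q}}}$, and for the rigid body case \eqref{eq:rigid_body_decomp} the Coriolis-type terms $\bm{\Omega}\times\mathbb{I}\bm{\Omega}$ and $\bm{\Omega}\times\mathbf{v}$ have vanishing partials in both $\bar{\mathbf{q}}$ and $\dot{\bar{\mathbf{q}}}$ at $\dot{\bar{\mathbf{q}}} = \mathbf{0}$ owing to \textbf{A5}. The Jacobian therefore inherits the sparse block form
\[
J_{\text{eq}} \;=\; \begin{bmatrix} \mathbf{0} & \mathbf{I} & \mathbf{0} \\ J_{21} & J_{22} & \mathbf{C} \\ -\tfrac{k}{2}(H\mathbf{A})^{\top} & \mathbf{0} & 0 \end{bmatrix},
\]
where $J_{21}, J_{22}$ encode any remaining linearized drift contributions and reduce further for the pure rigid-body specialization.

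The hard part, as I anticipate it, is to establish strict placement of every eigenvalue of $J_{\text{eq}}$ in the open left half plane. My plan is to form the characteristic polynomial by cofactor expansion along the third row/column, so that the eigenvalue equation reduces to a polynomial in $\lambda$ whose coefficients are built from $\mathbf{C}$, $\mathbf{A}$, $k$, and $H$. Using (i) positive definiteness of $H$ from \textbf{A3}, (ii) componentwise positivity of the design gains $\mathbf{C}$ and $\mathbf{A}$, and (iii) the design guideline $\|\mathbf{C}\| > \|\mathbf{A}\|$ emphasized after Remark~\ref{remark2}, I would apply a Routh--Hurwitz argument to place the roots in the open left half plane. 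Because the mechanical subsystem behaves as a double integrator stabilized only through the $\mathbf{C}\hat{\bar{u}}$ coupling and the slow $\hat{\bar{u}}$-adaptation, I expect this algebraic step to require delicate bookkeeping; should it run into obstruction from degeneracy along directions orthogonal to $\mathbf{C}$, I would fall back on a composite Lyapunov candidate
\[
V(\bar{\mathbf{x}}) = \tfrac{1}{2}\dot{\bar{\mathbf{q}}}^{\top}P\dot{\bar{\mathbf{q}}} + \alpha\bigl(J(\bar{\mathbf{q}}) - J(\bar{\mathbf{q}}^*)\bigr) + \tfrac{\beta}{2}(\hat{\bar{u}} - \hat{\bar{u}}^*)^{2} + \gamma\,\dot{\bar{\mathbf{q}}}^{\top}\nabla J(\bar{\mathbf{q}}),
\]
with $P, \alpha, \beta > 0$ and small $\gamma$ chosen so that $\dot V$ is locally negative definite via completion-of-squares and Young's inequality; the cross term $\gamma\dot{\bar{\mathbf{q}}}^{\top}\nabla J$ injects the damping that is missing from the bare mechanical subsystem. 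Since \eqref{eq:voc_averaged_system} is autonomous, local asymptotic stability is automatically uniform in the initial time, which closes the argument.
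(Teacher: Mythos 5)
Your plan takes a genuinely different route from the paper, and it contains obstructions I do not believe you can repair. The paper's proof is a one\--line appeal to prior work: Corollary \ref{corollary} places the ESC-VS rigid-body dynamics exactly in the form \eqref{eq:voc_averaged_system} already analyzed in \cite{elgohary2025letters}, so Proposition II.5 of that reference applies verbatim with $V=J(\mathbf{q})-J(\mathbf{q}^*)$ as the Lyapunov function; no linearization or eigenvalue placement is attempted anywhere.

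The decisive problem is your indirect-method step. For the general undamped rigid-body drift \eqref{eq:rigid_body_decomp}, $\mathbf{f}$ is purely quadratic in the velocities and independent of $\bar{\mathbf{q}}$, so at $\dot{\bar{\mathbf{q}}}=\mathbf{0}$ both $J_{21}$ and $J_{22}$ vanish and $\mathbf{M}_{22}\mathbf{A}$ is a constant vector that contributes nothing to the Jacobian. Your $J_{\mathrm{eq}}$ then describes $n$ double integrators closed through the single scalar state $\hat{\bar{u}}$, and a direct Schur-complement computation gives the characteristic polynomial $\lambda^{2n-2}\bigl(\lambda^{3}+\tfrac{k}{2}\mathbf{A}^{\top}H\mathbf{C}\bigr)$: there are $2n-2$ eigenvalues at the origin, and for the gradient-descent sign $\mathbf{A}^{\top}H\mathbf{C}>0$ the cubic factor has two roots with real part $+\tfrac{1}{2}\bigl(\tfrac{k}{2}\mathbf{A}^{\top}H\mathbf{C}\bigr)^{1/3}$. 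No Routh--Hurwitz bookkeeping can place these in the open left half plane; the linearization is not Hurwitz, the equilibrium is not exponentially stable, and whatever stability holds is invisible at first order. The same absence of linear damping undermines your fallback: in $\dot V$ the indefinite terms $\alpha\,\nabla J^{\top}\dot{\bar{\mathbf{q}}}$, $-\tfrac{\beta k}{2}(\hat{\bar{u}}-\hat{\bar{u}}^{*})\nabla J^{\top}\mathbf{A}$ and $+\gamma\,\dot{\bar{\mathbf{q}}}^{\top}H\dot{\bar{\mathbf{q}}}$ have no negative quadratic term available to absorb them, so completion of squares cannot deliver a locally negative definite derivative. A smaller but real error is your localization claim: the third equilibrium equation yields only $\nabla J(\bar{\mathbf{q}}^{*})^{\top}\mathbf{A}=0$, and \textbf{A3} guarantees $\nabla J\neq\mathbf{0}$ away from $\mathbf{q}^{*}$ but not $\nabla J^{\top}\mathbf{A}\neq 0$; with a single dither direction the projected gradient can vanish at non-minimizers. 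The workable argument is the paper's: verify membership in the class covered by \cite{elgohary2025letters} and import its Lyapunov result wholesale.
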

\begin{proof}
    Per \cite[Proposition II.5]{elgohary2025letters}, $V = J(\mathbf{q}) - J(\mathbf{q}^*)$ is a viable Lyapunov function for the VOC-averaged system \eqref{eq:voc_averaged_system} with the equilibrium $\bar{\textbf{x}}^* =\begin{bmatrix}
    \bar{\textbf{q}}^*& \bm{0} & \hat{\bar{u}}^*
\end{bmatrix}^{\top}$. Thus, $\bar{\textbf{x}}^*\in \mathscr{C}_0$ is asymptotically locally uniformly stable for the system \eqref{eq:voc_averaged_system}.
\end{proof}
\begin{theorem}\label{theorem}
    If assumptions \textbf{A1}-\textbf{A5} are satisfied, and Lemma 3.1 and 3.4 in place, then the ESC-VS rigid body system \eqref{eq:rigid_body_decomp} is  
practically uniformly stable for 
$\mathscr{C}_0$.
\end{theorem}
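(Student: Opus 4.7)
The plan is to chain together the four building blocks already established in the excerpt and then invoke the standard practical-stability conclusion of averaging theory. Specifically, Lemma 3.1 places the rigid-body dynamics \eqref{eq:rigid_body_decomp} inside the class \eqref{eqn:system_model}; Corollary 3.3 gives the closed-loop dynamics in the canonical ESC-VS form \eqref{eq:proposed_system_dynamics} along with its VOC-averaged counterpart \eqref{eq:voc_averaged_system}; Lemma 3.4 asserts that the averaged equilibrium $\bar{\mathbf{x}}^{*}$ is locally uniformly asymptotically stable, with Lyapunov function $V(\mathbf{q}) = J(\mathbf{q}) - J(\mathbf{q}^{*})$; and Corollary 2.2 provides the finite-horizon closeness estimate $|\mathbf{x}(t) - \bar{\mathbf{x}}(t)| = \mathcal{O}(1/\omega)$ for $t \in [0,\omega t_f]$. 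Practical uniform stability amounts to showing that, for every neighborhood of $\bar{\mathbf{x}}^{*}$ inside $\mathscr{C}_0$, the original ESC-VS trajectories can be confined there for all $t \geq 0$ by choosing $\omega$ sufficiently large and initial conditions sufficiently close.

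First I would exploit the Lyapunov function from Lemma 3.4 to construct a forward-invariant sublevel set $\Omega_c = \{\bar{\mathbf{x}} : V(\bar{\mathbf{q}}) \leq c\} \subset \mathscr{C}_0$ for the averaged system \eqref{eq:voc_averaged_system}, for all sufficiently small $c > 0$. Using the class-$\mathcal{K}$ bounds of assumption \textbf{A3}, this sublevel set can be sandwiched between two concentric balls about $\bar{\mathbf{x}}^{*}$, giving the usual $(\varepsilon,\delta)$ characterization of local uniform asymptotic stability for $\bar{\mathbf{x}}(t)$.

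Next I would transfer this estimate to the original trajectory $\mathbf{x}(t)$. Given any target tolerance $\varepsilon > 0$, pick a sublevel set $\Omega_{c}$ contained in the $\varepsilon/2$-ball of $\bar{\mathbf{x}}^{*}$, and by local asymptotic stability of the averaged system select $\delta > 0$ such that $\bar{\mathbf{x}}(0) \in B_{\delta}(\bar{\mathbf{x}}^{*})$ implies $\bar{\mathbf{x}}(t) \in \Omega_{c}$ for all $t \geq 0$. By Corollary 2.2, choose $\omega^{*}$ large enough that $|\mathbf{x}(t) - \bar{\mathbf{x}}(t)| \leq \varepsilon/2$ on the relevant horizon whenever $\omega \geq \omega^{*}$. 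Then for $\mathbf{x}(0) = \bar{\mathbf{x}}(0) \in B_{\delta}(\bar{\mathbf{x}}^{*})$ the triangle inequality yields $\mathbf{x}(t) \in B_{\varepsilon}(\bar{\mathbf{x}}^{*}) \subset \mathscr{C}_0$, which is precisely practical uniform stability of $\bar{\mathbf{x}}^{*}$ in $\mathscr{C}_0$ for the rigid-body ESC-VS system \eqref{eq:rigid_body_decomp}.

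The main obstacle is that Corollary 2.2 only guarantees closeness on the finite horizon $[0,\omega t_f]$, whereas practical stability is an infinite-horizon statement. I would handle this by using the invariance of $\Omega_c$ under the averaged flow to bootstrap the finite-horizon closeness into an infinite-horizon one: on each successive window, the averaged trajectory remains inside the forward-invariant sublevel set, so the closeness bound can be re-initialized from a point still within $\mathscr{C}_0$, and iterating this argument (in the spirit of the practical-stability proofs in \cite{bullo2002averaging,VectorFieldGRUSHKOVSKAYA2018,grushkovskaya2021extremum}) extends the $\mathcal{O}(1/\omega)$ tracking globally in time. This bridges the gap between the finite-time averaging result and the desired uniform practical-stability conclusion, completing the argument.
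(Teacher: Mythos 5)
Your proposal is logically correct and reaches the theorem along the same chain the paper uses (Lemma \ref{lemma} $\rightarrow$ Corollary \ref{corollary} $\rightarrow$ Lemma \ref{lemma2} $\rightarrow$ practical uniform stability), but it differs in one substantive way: the paper's proof is essentially a two-line delegation. Having established via Lemma \ref{lemma2} that $\bar{\mathbf{x}}^*$ is locally uniformly asymptotically stable for the averaged system \eqref{eq:voc_averaged_system}, the paper simply invokes Theorem II.3 of \cite{elgohary2025letters}, which already packages the implication ``averaged equilibrium uniformly asymptotically stable $\Rightarrow$ original ESC-VS system practically uniformly stable.'' What you have done instead is reconstruct the proof of that cited theorem from first principles: forward-invariant sublevel sets of $V=J(\mathbf{q})-J(\mathbf{q}^*)$ sandwiched by the class-$\mathcal{K}$ bounds of \textbf{A3}, the finite-horizon $\mathcal{O}(1/\omega)$ closeness of Corollary 2.2, and the window-by-window bootstrapping that upgrades finite-horizon tracking to an infinite-horizon practical-stability statement. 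This buys self-containedness and makes explicit where each assumption enters, at the cost of redoing work the reference already certifies; the paper's route is shorter and arguably the intended one, since the whole point of Lemmas \ref{lemma}--\ref{lemma2} is to verify the hypotheses of the prior framework so its stability theorem applies verbatim.

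One caution on your bootstrapping step: Corollary 2.2 requires $\bar{\mathbf{x}}(0)=\mathbf{x}(0)$, so when you re-initialize the averaged trajectory at the start of each successive window from the current state of the true system, the restarted averaged solution is not the continuation of the previous one, and the $\mathcal{O}(1/\omega)$ errors accumulate across windows unless the uniform decrease of $V$ along the averaged flow strictly absorbs the per-window perturbation. You gesture at this correctly by citing the invariance of $\Omega_c$ and the standard proofs in \cite{bullo2002averaging,grushkovskaya2021extremum}, but a complete argument must quantify that the contraction afforded by asymptotic stability dominates the restart error for $\omega$ large enough. Since that is exactly the content of Theorem II.3 of \cite{elgohary2025letters}, citing it (as the paper does) is the cleaner way to close this step.
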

\begin{proof}
     From Lemma 3.4, the equilibrium point $\bar{\textbf{x}}^* =\begin{bmatrix}
    \bar{\textbf{q}}^*& \bm{0} & \hat{\bar{u}}^*
\end{bmatrix}^{\top} \in \mathscr{C}_0$ is asymptotically locally uniformly stable for the system \eqref{eq:voc_averaged_system}. Thus, 
per \cite[Theorem II.3]{elgohary2025letters}, the ESC-VS rigid body system \eqref{eq:rigid_body_decomp} is practically uniformly stable for $\mathscr{C}_0$.
\end{proof}
\begin{remark}\label{remark_extend}
    Theorem \ref{theorem} effectively extends the results of \cite{elgohary2025letters} to the class of rigid body dynamics we consider in this paper. This ESC-VS rigid body design is depicted in Figure \ref{fig:esc_diagram}.
\end{remark}

In practice, an optional high-pass filter can be incorporated into the measurement feedback within the adaptation law to reduce the effect of low-frequency disturbances or bias in the objective function \cite{ECC2024,elgohary2025model}. The filter state \( h \) evolves according to:
\begin{equation}
    \dot{h} = -e h + J,
\end{equation}
where \( e > 0 \) is the filter gain. With this filtering in place, the adaptation law in \eqref{eq:control} for the estimated control input is adjusted to:
\begin{equation}
    \dot{\hat{u}} = k ( J-eh)  \omega \cos(\omega t).
    \label{eq:adaptive_control_estimate_hpf}
\end{equation}

\begin{figure*}[ht]
\centering
    \includegraphics[width=0.8\linewidth]{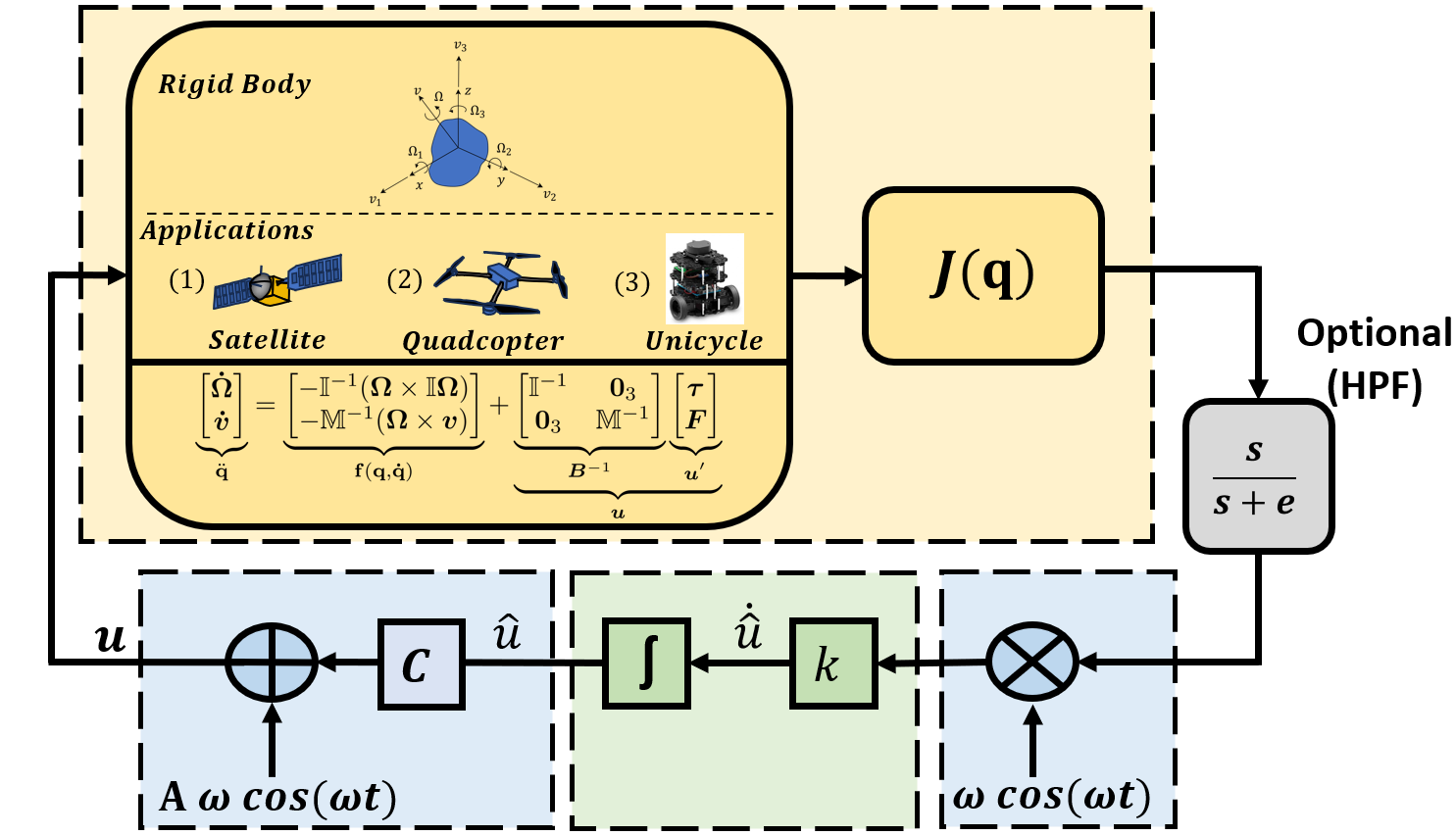}
    \caption{The proposed ESC-VS rigid body structure, which we demonstrate its effectiveness by three major applications.}
    \label{fig:esc_diagram}
\end{figure*}

In the following subsections, we will present three examples to show how our general result translates to the analysis of practical real-world rigid body systems.

\textbf{Control parameters tuning guidelines.}As stated in Remark \ref{remark_extend}, Theorem \ref{theorem} guarantees that the proposed ESC-VS system maintains the same stability properties of the ESC system in \cite[Theorem II.3]{elgohary2025letters}; hence, the same guidelines for parameter tuning mentioned in \cite{elgohary2025letters} applies to the proposed ESC-VS. That is, to ensure accurate applicability of the averaging results (see Corollary \ref{corollary}), $\omega$ must be set to a sufficiently large value. For $k$, which from the control estimate adaptation law \eqref{eq:law_hat} and the VOC-averaged system \eqref{eq:voc_averaged_system} represents the learning rate for $\nabla_{\textbf{q}} J(\textbf{q})$, it should be tuned from lower to higher rates to find the appropriate value suitable for the system's map. Moreover, to allow the ESC control action to prevail over the drift contribution and enable stabilization per \cite[Proposition II.5]{elgohary2025letters}, it is needed for $\bm{C}$ to be picked sufficiently larger than $\bm{A}$, while the latter can be set adequately small; This also can be seen directly from the second line in the VOC-averaged system \eqref{eq:voc_averaged_system} where larger $\bm{C}$ values mean the term $\mathbf{C}\hat{\bar{u}}$ is dominant over the drift forces and able to control $\dot{\bar{\mathbf{q}}}$. Finally, from \eqref{eq:voc_averaged_system}, one can, if necessary, select the elements of $\bm{A}$ to scale the components of $\nabla_{\textbf{q}} J(\textbf{q})$. 

\subsection{Application 1: Satellite Attitude Control}
We begin by considering a satellite with three reaction wheels that are aligned with the body frame axes of the satellite. The rotational dynamics for the satellite (\cite{wie1989quaternion}, \cite{suttner2022extremum}, \cite{suttner2024attitude}) is given in \eqref{eq:satellite_att_dynamics}. However, we include the dissipation term as in \cite{suttner2022extremum} and add the coupling effect between the satellite body and the reaction wheels:
\begin{equation}
    \mathbb{I}\bm{\dot{\Omega}} = -(\bm{\Omega} \times \mathbb{I}\bm{\Omega}) - (\bm{\Omega} \times \bm{\Omega}_{RW}\mathbb{I}_{RW}) - D\bm{\Omega} + \bm{\tau}_{RW}
    \label{eq:satellite_att_dynamics}
\end{equation}
with
\begin{equation}
    \bm{\tau}_{RW} = -\mathbb{I}_{RW}\bm{\dot \Omega}_{RW}
\end{equation}
where the terms $\bm{\dot{\Omega}}_{RW}, \bm{\Omega}_{RW} \in \mathbb{R}^3$ are the angular acceleration and velocity of the reaction wheels, $\mathbb{I}_{RW} = diag(\begin{bmatrix}
    I_{RW_{xx}} &I_{RW_{yy}} &I_{RW_{zz}}
\end{bmatrix}) \in \mathbb{R}^{3\times 3}$ represent the reaction wheel inertia matrix, and $D = diag(\begin{bmatrix}
    d_{{1}} &d_{{2}} &d_{{3}}
\end{bmatrix}) \in \mathbb{R}^{3\times 3}$ is the diagonal damping matrix that captures dissipative effects.
In this scenario, the translational rigid body dynamics \eqref{eq:rigid_body_tran_dyn} is not considered, however, \eqref{eq:satellite_att_dynamics} adds some additional terms to the general angular rigid body dynamics of \eqref{eq:rigid_body_rot_dyn}. Nevertheless, \eqref{eq:satellite_att_dynamics}  can still be rewritten to fit the general form of \eqref{eqn:system_model} as: 
\small
\begin{align}\label{satform1}
    \underbrace{\begin{bmatrix}
        \bm{\dot{\Omega}}\\\dot{\bm{\Omega}}_{RW}
    \end{bmatrix}}_{\ddot{\mathbf{q}}} =& \underbrace{\begin{bmatrix}
        -\mathbb{I}^{-1}(\bm{\Omega} \times \mathbb{I}\bm{\Omega}) -\mathbb{I}^{-1}(\bm{\Omega} \times \bm{\Omega}_{RW}I_{RW}) -\mathbb{I}^{-1}D\bm{\Omega}\\\bm{0}_3
    \end{bmatrix}}_{\mathbf{f(q,\dot{q})}} + \nonumber\\ &+\underbrace{\begin{bmatrix}
        \mathbb{I}^{-1} \bm{\tau}_{RW}\\-\mathbb{I}_{RW}^{-1} \bm{\tau}_{RW}
    \end{bmatrix}}_{\boldsymbol{u}}. 
\end{align}
\normalsize
Since $\mathbb{I}_{RW}$ is a constant diagonal inertia matrix, per Lemma \ref{lemma}, \eqref{satform1} is in the form \eqref{eqn:system_model} and Theorem 3.5 is applicable.

To steer the attitude of the satellite toward a desired configuration, we assume that the satellite has access to measurement of the objective function 
\begin{equation}
    J = \text{Q}_{e1}^2 + \text{Q}_{e2}^2 + \text{Q}_{e3}^2,
\end{equation}
where $\text{Q}_{e1}$, $\text{Q}_{e2}$, and $\text{Q}_{e3}$ are the vector components of the error quaternion between the desired and current attitude. We omit the scalar component of the error quaternion, $\text{Q}_{e4}$, from the objective function, as it will be equal to $1$ or $-1$ when the vector components are zero. The definition of the error quaternion follows from \cite{wie1989quaternion}:
\begin{equation}
    \begin{bmatrix}
        \text{Q}_{e1} \\ \text{Q}_{e2} \\ \text{Q}_{e3} \\ \text{Q}_{e4}
    \end{bmatrix}
    =
    \begin{bmatrix}
        \text{Q}_{4d} & \text{Q}_{3d} & -\text{Q}_{2d} & -\text{Q}_{1d} \\
        -\text{Q}_{3d} & \text{Q}_{4d} & \text{Q}_{1d} & -\text{Q}_{2d} \\
        \text{Q}_{2d} & -\text{Q}_{1d} & \text{Q}_{4d} & -\text{Q}_{3d} \\
        \text{Q}_{1d} & \text{Q}_{2d} & \text{Q}_{3d} & \text{Q}_{4d} \\
    \end{bmatrix}
    \begin{bmatrix}
        \text{Q}_1 \\ \text{Q}_2 \\ \text{Q}_3 \\ \text{Q}_4
    \end{bmatrix},
\end{equation}
where $\text{Q}_{1d}$, $\text{Q}_{2d}$, $\text{Q}_{3d}$, and $\text{Q}_{4d}$ are the desired quaternions, $\text{Q}_{e4}$ is the scalar quaternion error element, and $\textbf{Q} =\begin{bmatrix}
    \text{Q}_1 \ \text{Q}_2 \ \text{Q}_3 \ \text{Q}_4
\end{bmatrix}^T$ is the quaternion describing the current attitude. The quaternion attitude description is updated by the kinematic differential equation:

\begin{equation}\label{eq:kinquat}
    \dot{\textbf{Q}} = \frac{1}{2} \begin{bmatrix}
        \text{Q}_4 & -\text{Q}_3 & \text{Q}_2 \\
        \text{Q}_3 & \text{Q}_4 & -\text{Q}_1 \\
        -\text{Q}_2 & \text{Q}_1 & \text{Q}_4 \\
        -\text{Q}_1 & -\text{Q}_2 & -\text{Q}_3
    \end{bmatrix} \bm{\Omega}.
\end{equation}
Having selected the generalized coordinates such that $\dot{\textbf{q}} = \begin{bmatrix}
    \bm{\Omega} & \bm{\Omega}_{RW}
\end{bmatrix}^{\top}$, and assuming that $\textbf{Q}=\varphi(\textbf{q})$ exists and well defined for a smooth map $\varphi$, we use \eqref{eq:kinquat} to generate the measurements of $J$.


\subsection{Application 2: Quadcopter Attitude Control}
Given the quadcopter underactuated dynamics, generally the translational and attitude control loops are tackled separately. For proof of concept, we focus on applying the ESC-VS rigid body system to stabilize the attitude of the quadcopter. We will consider the quadcopter dynamics (\cite{quadcopterMPCspringer}, \cite{kai2017quadrotordrag}) without coupling between the rotors and the body of the quadcopter for simplicity, but with the inclusion of a linear dissipation term dependent on $\bm{\Omega}$
\begin{align}\label{eq:quadcopter_rotdyn}
    \mathbb{I}\dot{\bm{\Omega}} &= -(\bm{\Omega} \times \mathbb{I}\bm{\Omega})  - k_r\bm{\Omega} + \bm{\tau},\\
    \mathbb{M}\bm{\dot{v}} &= -(\bm{\Omega} \times \bm{v})- k_d\bm{v}+\bm{F} \label{eq:quadcopter_tradyn}
\end{align}
where $k_r= diag(\begin{bmatrix}
    k_{r1} &k_{r2} &k_{r3}
\end{bmatrix}) \in \mathbb{R}^{3\times 3}$ and $k_d= diag(\begin{bmatrix}
    k_{d1} &k_{d2} &k_{d3}
\end{bmatrix}) \in \mathbb{R}^{3\times 3}$  represent the aerodynamic drag matrices.  In the general quadcopter model, the only body frame applied force is the $z$-axis thrust force $T$, so that $\bm{F} = \begin{bmatrix}
     0&0&T
 \end{bmatrix}^{\top}$ For simplicity, in the following we will select a constant thrust input $T$ and apply the ESC-VS for the stabilization of the attitude dynamics only. To this end, \eqref{eq:quadcopter_rotdyn}  can be rewritten as
\begin{equation}\label{eq:quadcopterdyn}
    \underbrace{\begin{bmatrix}
        \bm{\dot{\Omega}}
    \end{bmatrix}}_{\ddot{\mathbf{q}}} = \underbrace{\begin{bmatrix}
        -\mathbb{I}^{-1}(\bm{\Omega} \times \mathbb{I}\bm{\Omega}- k_r\bm{\Omega})
    \end{bmatrix}}_{\mathbf{f(q,\dot{q})}} + \underbrace{
        \mathbb{I}^{-1}\bm{\tau}
    }_{\bm{u}}.
\end{equation}
Equation \eqref{eq:quadcopterdyn} clearly satisfy the form \eqref{eqn:system_model} per Lemma 3.1. As a result, Theorem 3.5 is applicable.

We select the objective function measurements as
\begin{equation}
    J = (\psi - \psi_d)^2 + (\theta - \theta_d)^2 + (\phi - \phi_d)^2,
\end{equation}
where $\bm{\eta} =\begin{bmatrix}
    \psi &\theta &\phi
\end{bmatrix}^{\top}$ is the vector of the 3-2-1 Euler Angles and $\psi_d$, $\theta_d$, and $\phi_d$ represent the respective  desired attitude. The propagation of the Euler Angle sequence is done using the kinematic differential equation as follows (\cite{schaub2018analytical}):
\begin{equation}\label{eq:kineuler}
    \begin{bmatrix}
        \dot{\psi} \\ \dot{\theta} \\ \dot{\phi}
    \end{bmatrix}
    = \frac{1}{\cos{(\theta)}}
    \begin{bmatrix}
        0 & \sin{(\phi)} & \cos{(\phi)} \\
        0 & \cos{(\phi)}\cos{(\theta)} & -\sin{(\phi)}\cos{(\theta)} \\
        \cos{(\theta)} & \sin{(\phi)}\sin{(\theta)} & \cos{(\phi)}\sin{(\theta)}
    \end{bmatrix} \bm{\Omega}
\end{equation}
Similar to the previous application, we use \eqref{eq:kineuler} to generate the measurements of $J$.
\subsection{Application 3: Acceleration-controlled Unicycle}
Lastly, we will consider the accelerating unicycle dynamics model with the inclusion of dissipative effects (\cite{suttner2019accelunicycle,suttnerkrisitic2023unicycletorque,elgohary2025model}). The equations of motion are a direct result of restricting the evolution of \eqref{eq:rigid_body_tran_dyn} on to a line and the evolution of \eqref{eq:rigid_body_rot_dyn} about the $z$-axis leading to:
\begin{equation}\label{eq:unicycle_ESC}
\begin{aligned}
    \underbrace{\begin{bmatrix}
        \dot{\Omega} \\ \dot{v}
    \end{bmatrix}}_{\ddot{\mathbf{q}}} &= \underbrace{\begin{bmatrix}
        -d_\Omega \Omega \\ -d_v v
    \end{bmatrix}}_{\mathbf{f}(\mathbf{q},\dot{\mathbf{q}})} + \underbrace{\begin{bmatrix}
        \tau \\ F
    \end{bmatrix}}_{\mathbf{u}},
\end{aligned}
\end{equation}
where $v$ is the linear velocity and $\Omega$ is the angular velocity. The terms $d_v$ and $d_{\Omega}$ represent dissipative effects. Clearly the system \eqref{eq:unicycle_ESC} is in the form of \eqref{eqn:system_model} per Lemma 3.1. As a result, Theorem 3.5 is applicable.
We assume that the unicycle system can constantly measure the objective function
\begin{equation}
    J = (x-x_d)^2 + (y-y_d)^2,
\end{equation}
where $\bm{p} =\begin{bmatrix}
    x & y 
\end{bmatrix}^{\top}$ is the current position vector and $x_d$ and $y_d$ are the respective desired coordinates.
The combined effect of $v$ and $\Omega$ cause a position displacement in the $x$ and $y$  coordinate axes according to the kinematic differential equation 
\begin{align}\label{eq:xdot}
    \dot{x} &= v\cos{(\vartheta)} \\
    \dot{y} &= v\sin{(\vartheta)} \label{eq:ydot} \\
    \dot{\vartheta} &= \Omega \label{eq:thetadot}
\end{align}
Similar to the previous application, we use \eqref{eq:xdot} and \eqref{eq:ydot} to generate the measurements of $J$.
\section{Numerical Simulations and Discussion}\label{sec:sim}
In this section, we demonstrate the ability of the ESC-VS rigid body structure (Figure \ref{fig:esc_diagram}) to stabilize both the satellite and quadcopter attitude, as well as to steer the unicycle model to the desired state. The ESC-VS rigid body system operates in real-time using only a single high-frequency perturbation signal and measurements of the objective function. The parameter values assigned to each simulation of the three presented applications are reported in Table \ref{tab:tabsat}, Table \ref{tab:tabquad}, and Table \ref{tab:tabuni}, respectively. While the model parameters of each platform are set according to respective literature (e.g., \cite{suttner2022extremum,quadcopter2007params,suttnerkrisitic2023unicycletorque,elgohary2025model,suttner2024attitude,suttner2019accelunicycle,kai2017quadrotordrag}), the control parameters are selected following the tuning guidelines highlighted at the end of Section \ref{sec:main}.

We observe in Figure~\ref{fig:satellite_simulation_results} that the vector components of the quaternion, $\text{Q}$, all oscillate about zero, and the scalar component of the quaternion converges to a value of $1$, due to the holonomic constraint (also known as the unit-norm constraint). Also, noting that the objective function reaches the minimum, we can state that the satellite has successfully stabilized about the desired attitude using the ESC-VS control loop. Furthermore, there is minimal change in the control estimate, satellite angular velocity, and reaction wheel angular velocity as time progresses. That is, the signals remain bounded and settle to small oscillations about their steady state values. For the quadcopter, from Figure~\ref{fig:quadcopter_simulation_results}, we observe that all Euler Angles oscillate about the desired zero-angle configuration, with the objective function reaching the minimum value. Hence, we can conclude that the quadcopter attitude has been stabilized successfully by the ESC-VS method.
The results for the accelerating unicycle model are presented in Figure~\ref{fig:unicycle_simulation_results}, from which we observe that the measurement function is minimized successfully leading to both $x$ and $y$ position variables to oscillate about the desired position coordinates $\begin{bmatrix}
    1 & 1
\end{bmatrix}^{\top} \ \text{m}$, demonstrating successful source-seeking using the ESC-VS structure. Finally, the obtained results highlight the potential impact of our main contribution of extending the ESC-VS to rigid body dynamics, opening its implementation to further numerous real-world applications. All code and simulation files are available at \cite{elgohary2025github}.

\begin{table}[h!]
\centering
\caption{ESC-VS Satellite System Parameters and Initial Conditions}\label{tab:tabsat}
\begin{tabular}{@{}ll@{}}
\toprule
\textbf{Parameter} & \textbf{Value} \\ 
\midrule
$I_{xx}, I_{yy}, I_{zz}$ & $1,\ 2,\ 3$ \\
$I_{RW_{xx}}, I_{RW_{yy}}, I_{RW_{zz}}$ & $0.005,\ 0.005,\ 0.005$ \\
$d_1, d_2, d_3$ & $0.2,\ 0.4,\ 0.6$ \\[3pt]
$\bm{Q}(0)$ & $\begin{bmatrix} 0.57 & 0.57 & 0.57 & 0.159 \end{bmatrix}^T$ \\
$\bm{Q}_d$ & $\begin{bmatrix} 0 & 0 & 0 & 1 \end{bmatrix}^T$ \\[3pt]
$\bm{\Omega}(0)$ & $\begin{bmatrix} 0.01 & 0.01 & 0.01 \end{bmatrix}^T\ \text{rad/s}$ \\
$\bm{\Omega}_{RW}(0)$ & $\begin{bmatrix} 0 & 0 & 0 \end{bmatrix}^T\ \text{rad/s}$ \\[3pt]
$\hat{u}(0)$ & $0$ \\
$h(0)$ & $0$ \\[3pt]
$a_1, a_2, a_3$ & $4.167\times10^{-5},\ 6.25\times10^{-5},\ 5.55\times10^{-5}$ \\
$a_4, a_5, a_6$ & $-8.3\times10^{-3},\ -2.5\times10^{-3},\ -3.3\times10^{-3}$ \\
$c_1, c_2, c_3$ & $3.375,\ 3.4167,\ 2.3056$ \\
$c_4, c_5, c_6$ & $-675,\ -1366.6667,\ -1383.3333$ \\
$k, e$ & $2.4,\ 15$ \\
$\omega$ & $36$\\
\bottomrule
\end{tabular}
\end{table}

\begin{figure*}[htbp]
\centering
    \begin{subfigure}[b]{0.33\linewidth}
    \includegraphics[width=\linewidth]{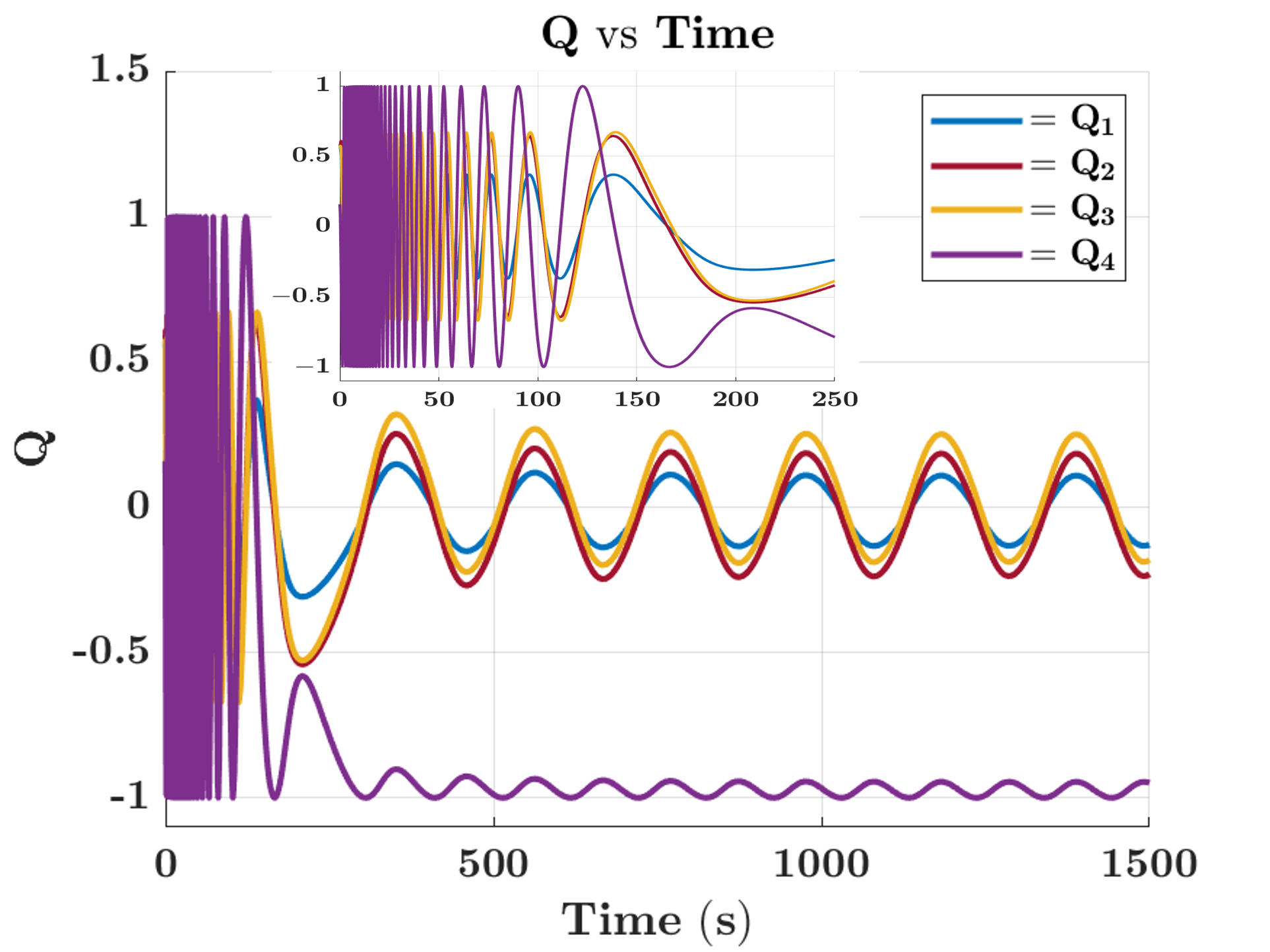}
    \caption{}
    \end{subfigure}\hfill
    \begin{subfigure}[b]{0.33\linewidth}
    \includegraphics[width=\linewidth]{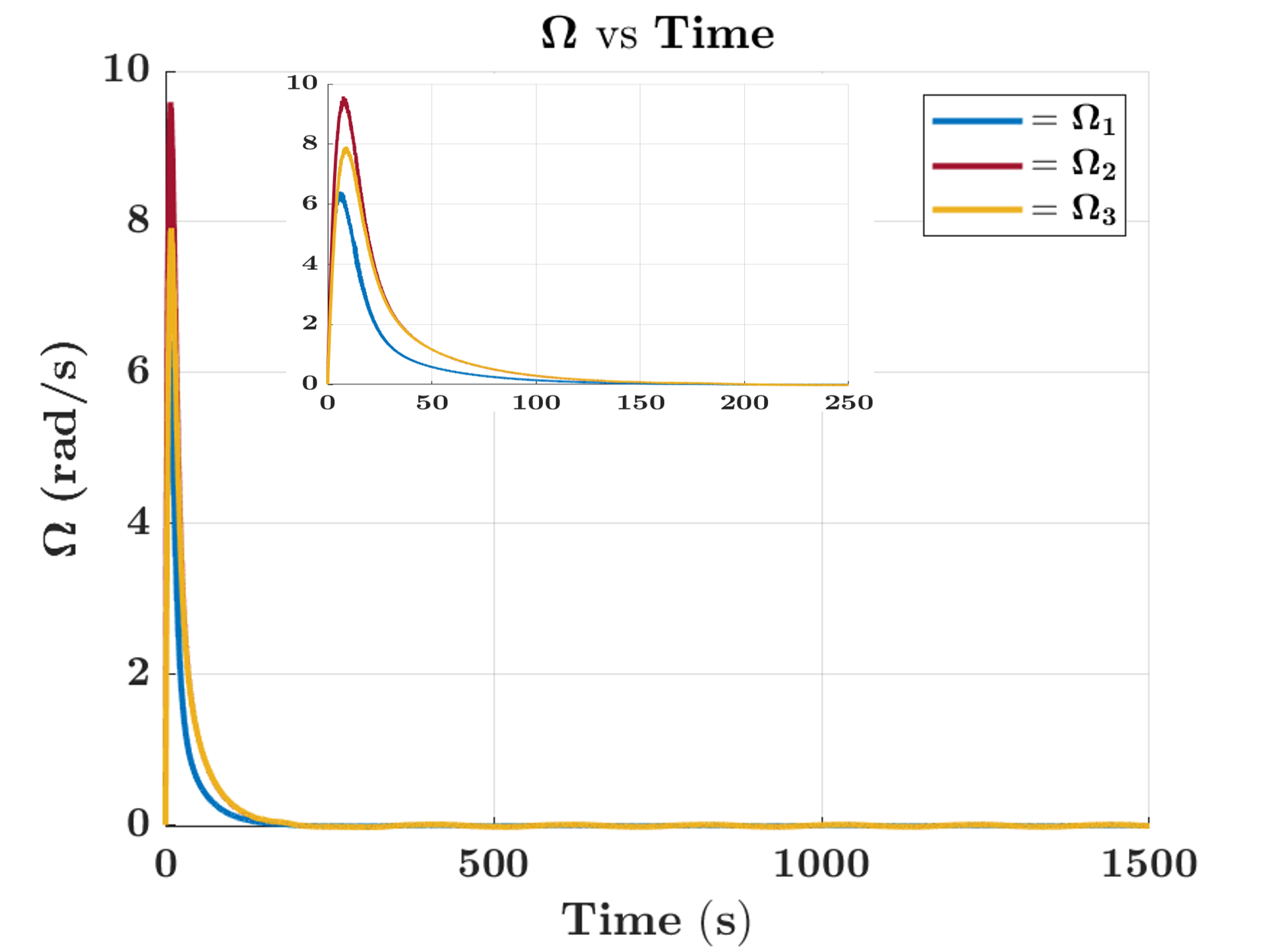}
    \caption{}
    \end{subfigure}\hfill
    \begin{subfigure}[b]{0.33\linewidth}
    \includegraphics[width=\linewidth]{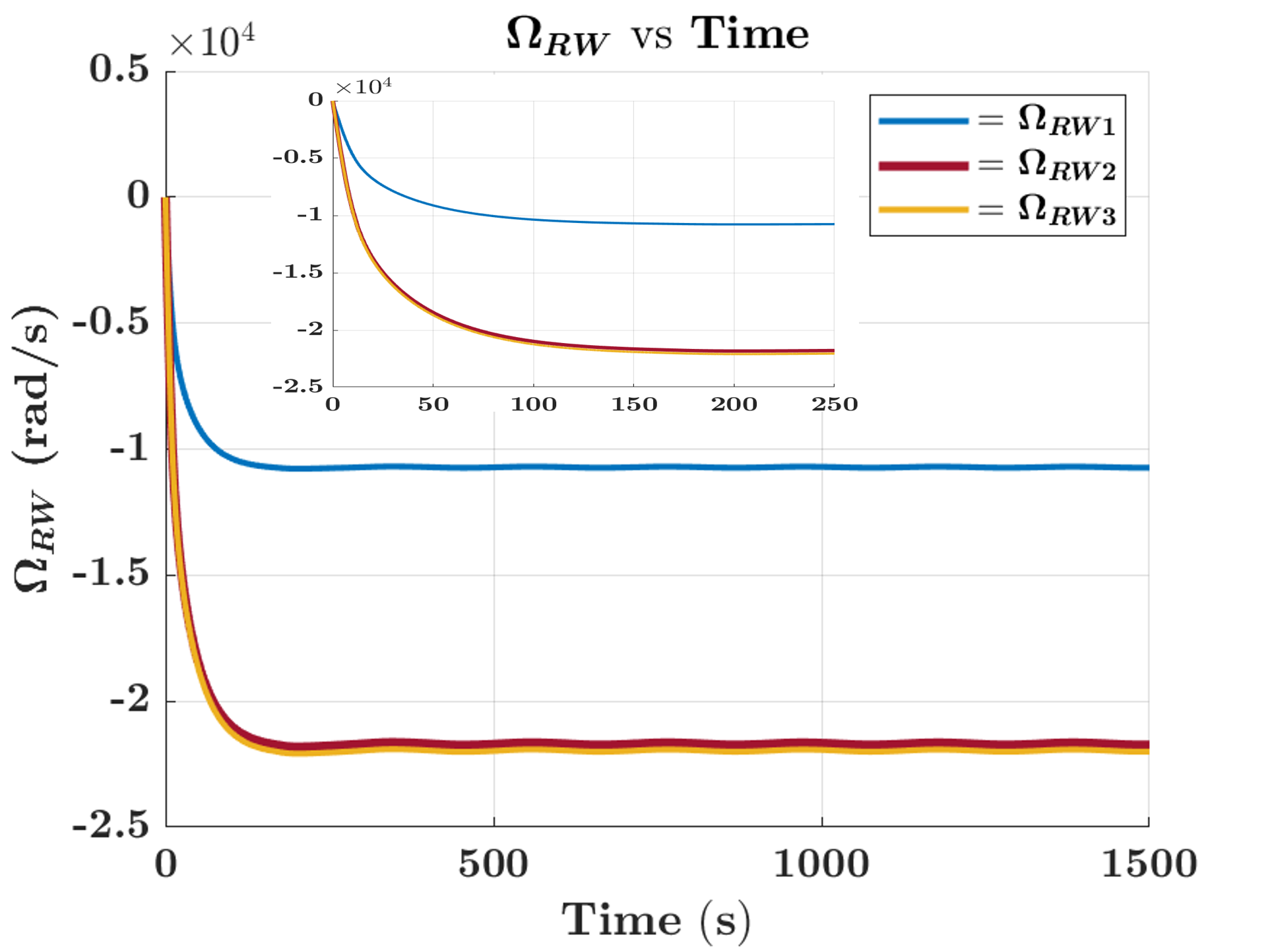}
    \caption{}
    \end{subfigure}
    \begin{subfigure}[b]{0.33\linewidth}
    \centering
    \includegraphics[width=\linewidth]{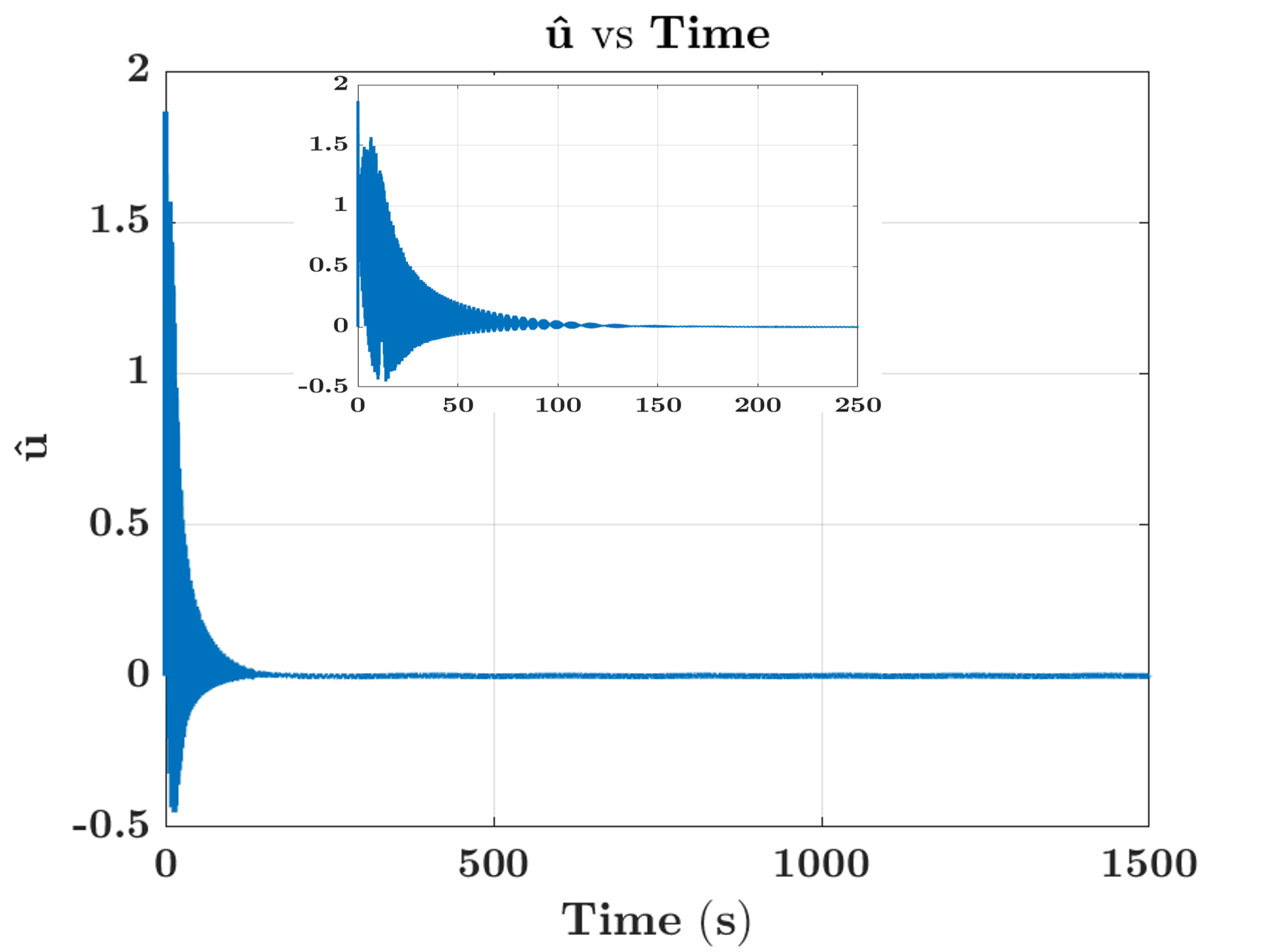}
    \caption{}
    \end{subfigure}\hspace{0.02\linewidth}
    \begin{subfigure}[b]{0.33\linewidth}
    \centering
    \includegraphics[width=\linewidth]{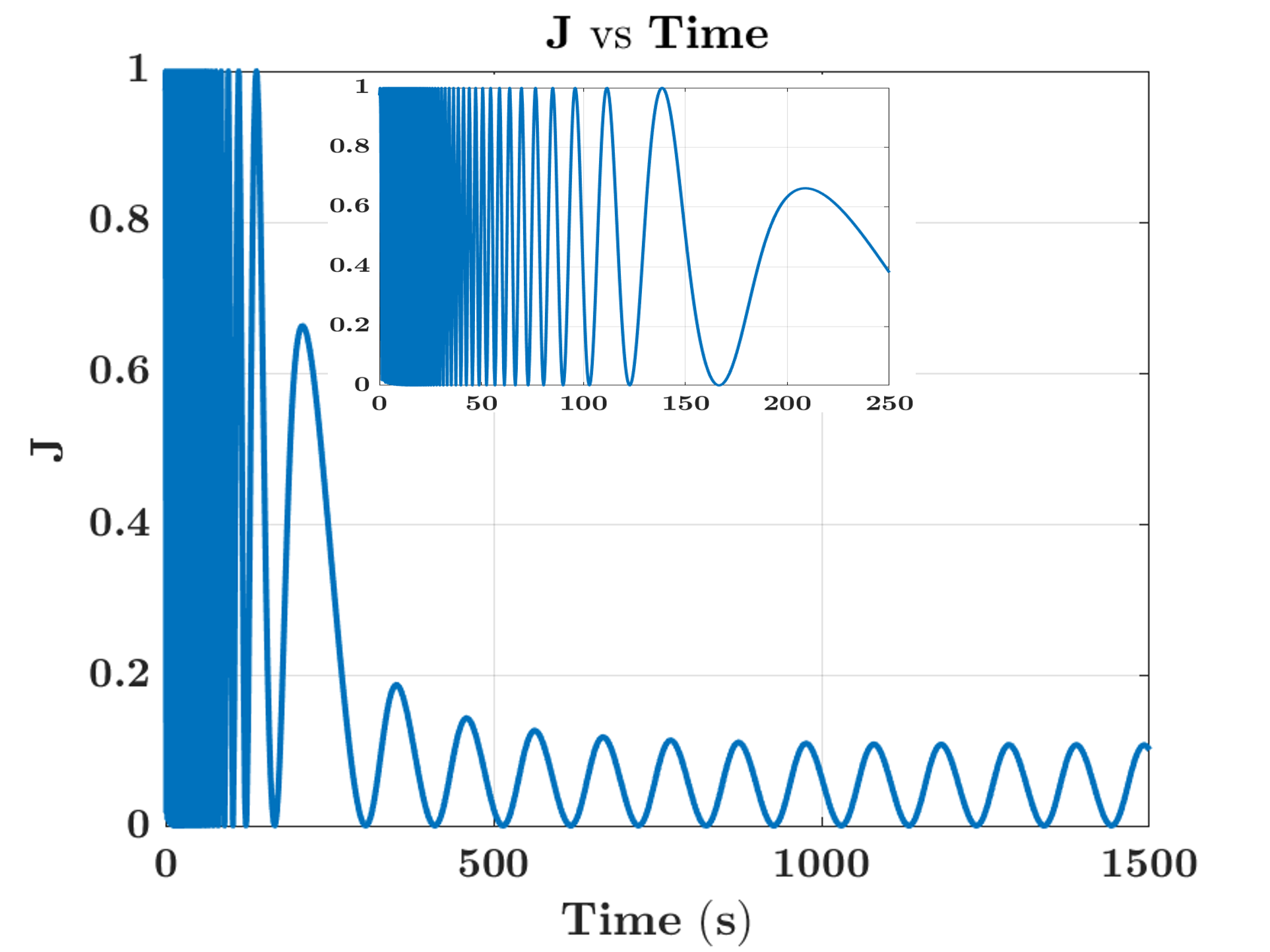}
    \caption{}
    \end{subfigure}
    \caption{Simulation Results for the satellite. (a) Quaternion, $\bm{Q}$, (b) Satellite angular velocity, $\bm{\Omega}$, (c) Reaction wheel angular velocity, $\bm{\Omega}_{RW}$, (d) Control estimate, $\hat{u}$, (e) Objective function measurements, $J$.}
    \label{fig:satellite_simulation_results}
\end{figure*}

\begin{table}[h!]
\centering
\caption{ESC-VS Quadcopter System Parameters and Initial Conditions}\label{tab:tabquad}
\begin{tabular}{@{}ll@{}}
\toprule
\textbf{Parameter} & \textbf{Value} \\ 
\midrule
$I_{xx}, I_{yy}, I_{zz}$ & $0.0075,\ 0.0075,\ 0.013$ \\
$k_{r1}, k_{r2}, k_{r3}$ & $0.1,\ 0.1,\ 0.15$ \\[3pt]
$\psi(0),\ \theta(0),\ \phi(0)$ & $0.1745,\ 0.2618,\ 0.2094\ \text{rad}$ \\
$\psi_d,\ \theta_d,\ \phi_d$ & $0,\ 0,\ 0\ \text{rad}$ \\[3pt]
$\bm{\Omega}(0)$ & $\begin{bmatrix} 0 & 0 & 0 \end{bmatrix}^T\ \text{rad/s}$ \\
$\hat{u}(0)$ & $0$ \\
$h(0)$ & $0$ \\[3pt]
$a_1, a_2, a_3$ & $6.6667\times10^{-4},\ 1\times10^{-3},\ 6.5385\times10^{-4}$ \\
$c_1, c_2, c_3$ & $20.5333,\ 31.6,\ 14.2308$ \\
$k, e$ & $3.95, \ 4$ \\
$\omega$ & $25$ \\
\bottomrule
\end{tabular}
\end{table}

\begin{figure*}[htbp]
    \centering
    \begin{subfigure}[b]{0.33\linewidth}
    \centering
    \includegraphics[width=\linewidth]{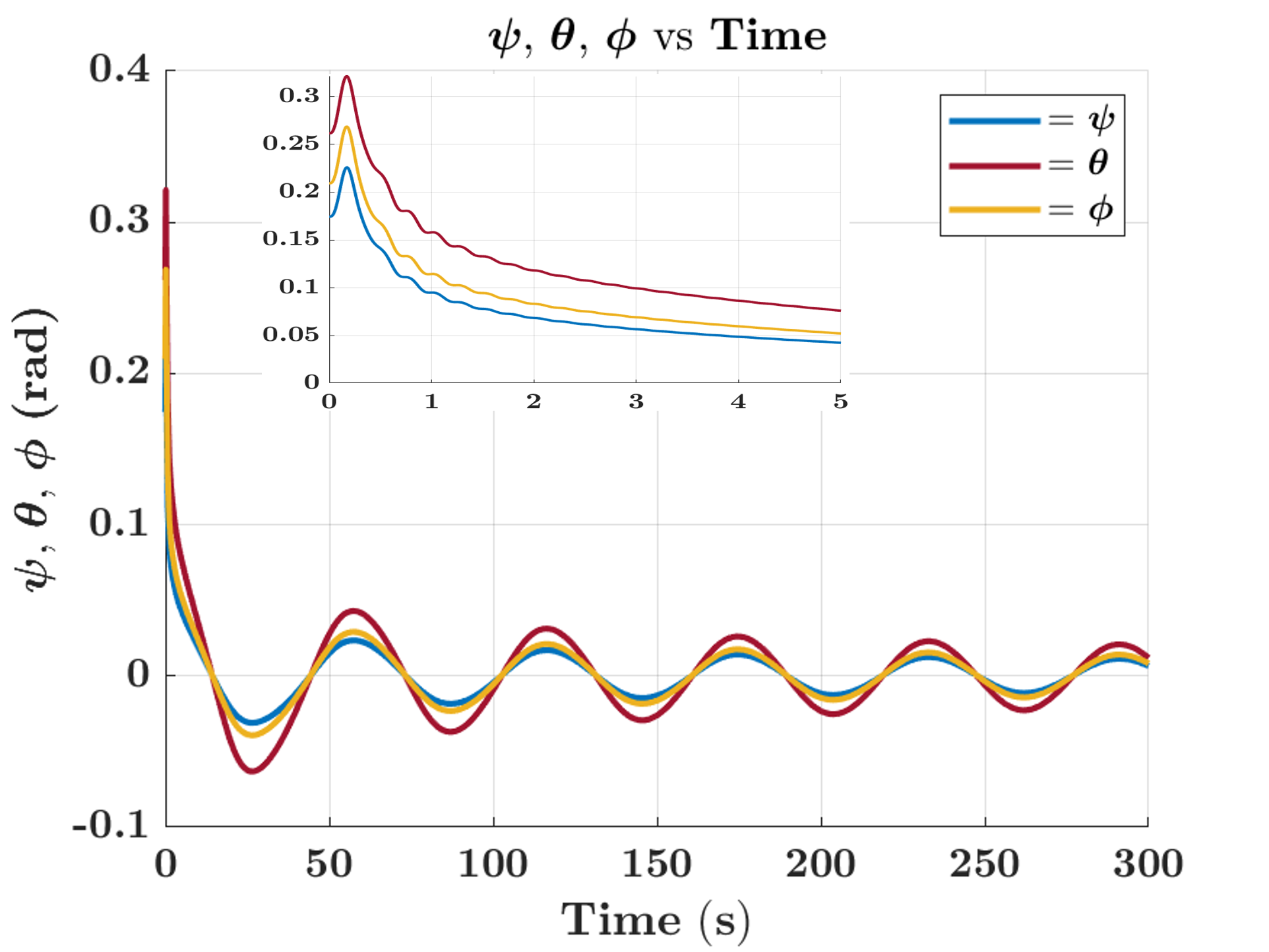}
    \caption{}
    \end{subfigure}\hspace{0.02\linewidth}
    \begin{subfigure}[b]{0.33\linewidth}
    \centering
    \includegraphics[width=\linewidth]{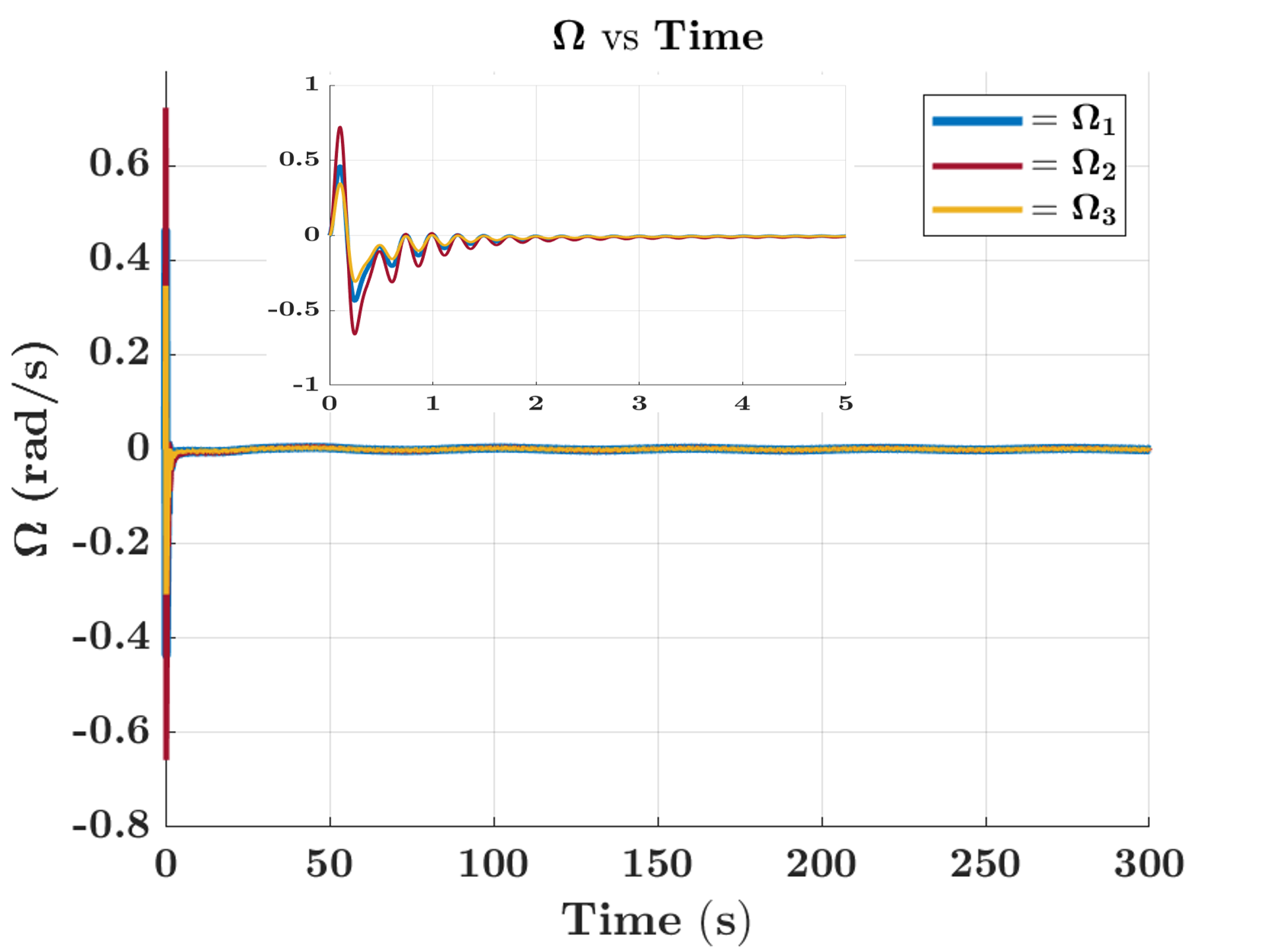}
    \caption{}
    \end{subfigure}\hfill
    \begin{subfigure}[b]{0.33\linewidth}
    \centering
    \includegraphics[width=\linewidth]{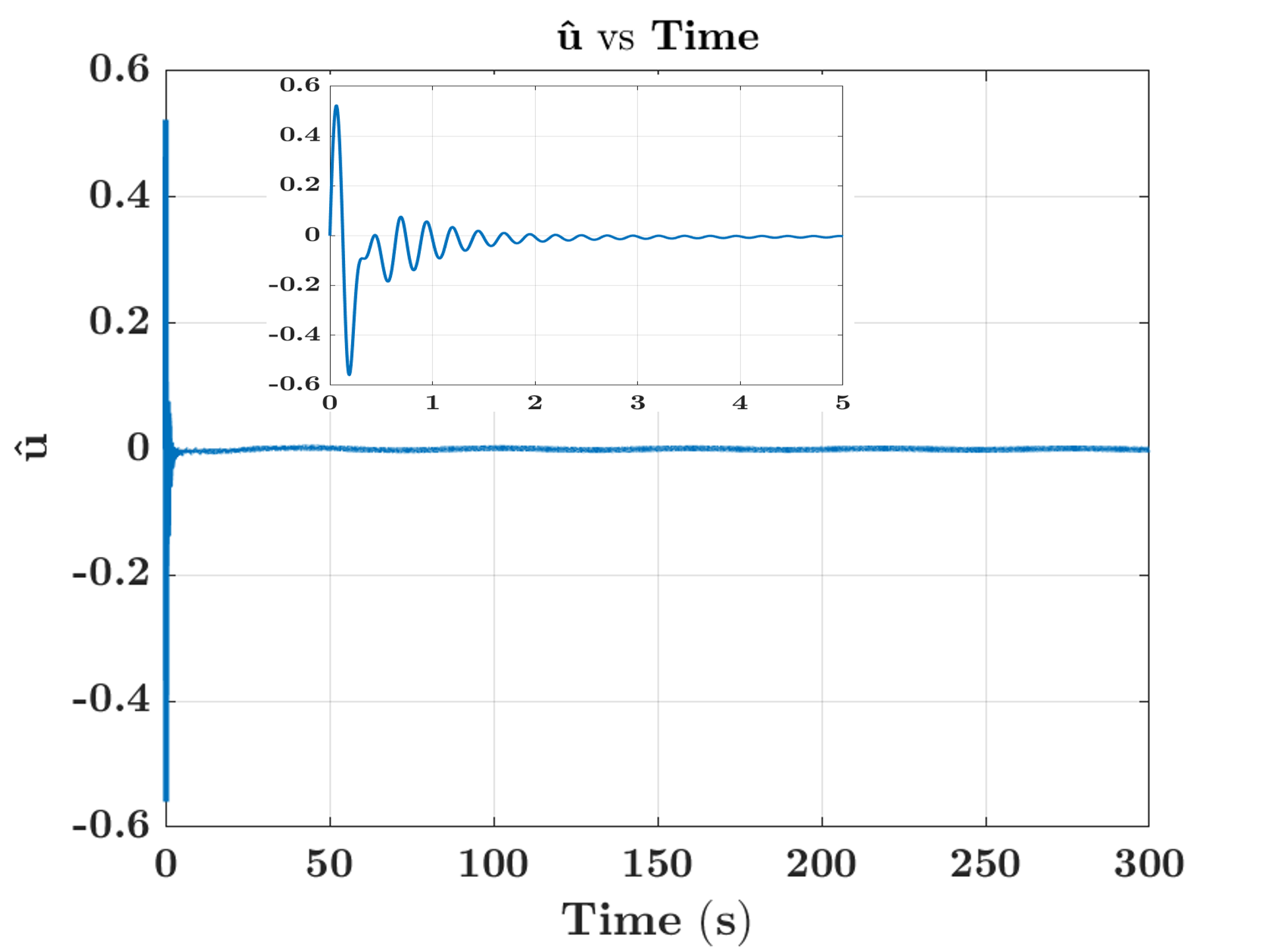}
    \caption{}
    \end{subfigure}\hspace{0.02\linewidth}
    \begin{subfigure}[b]{0.33\linewidth}
    \centering
    \includegraphics[width=\linewidth]{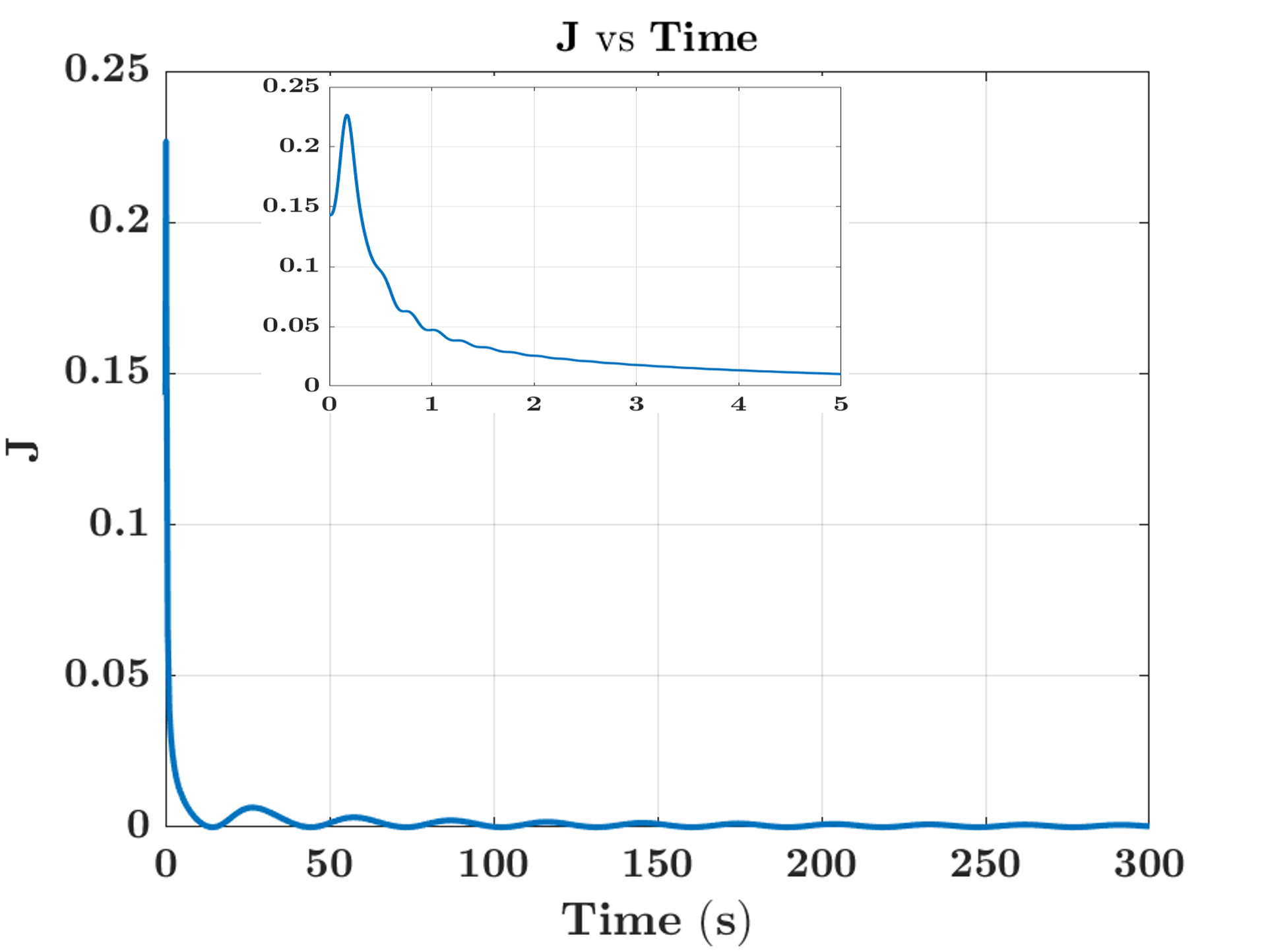}
    \caption{}
    \end{subfigure}
    \caption{Simulation Results for the quadcopter. (a) Euler angles, $\phi, \ \theta, \ \psi$, (b) Quadcopter angular velocity, $\bm{\Omega}$, (c) Control estimate, $\hat{u}$, (d) Objective function measurements, $J$.}
    \label{fig:quadcopter_simulation_results}
\end{figure*}

\begin{table}[h!]
\centering
\caption{ESC-VS Unicycle System Parameters and Initial Conditions}\label{tab:tabuni}
\begin{tabular}{@{}ll@{}}
\toprule
\textbf{Parameter} & \textbf{Value} \\ 
\midrule
$d_v,\ d_{\Omega}$ & $0.2,\ 0.1$ \\[3pt]
$x(0),\ y(0)$ & $2,\ 2\ \text{m}$ \\
$x_d,\ y_d$ & $1,\ 1\ \text{m}$ \\[3pt]
$v(0)$ & $0\ \text{m/s}$ \\
$\Omega(0)$ & $3\ \text{rad/s}$ \\
$\hat{u}(0)$ & $0$ \\[3pt]
$a_1,\ a_2$ & $4\times10^{-5},\ 5.2\times10^{-3}$ \\
$c_1,\ c_2$ & $0.4,\ 2.4$ \\
$k$ & $12.5$ \\
$\omega$ & $50$ \\
\bottomrule
\end{tabular}
\end{table}

\begin{figure*}[htbp]
    \centering
    \begin{subfigure}[b]{0.33\linewidth}
    \includegraphics[width=\linewidth]{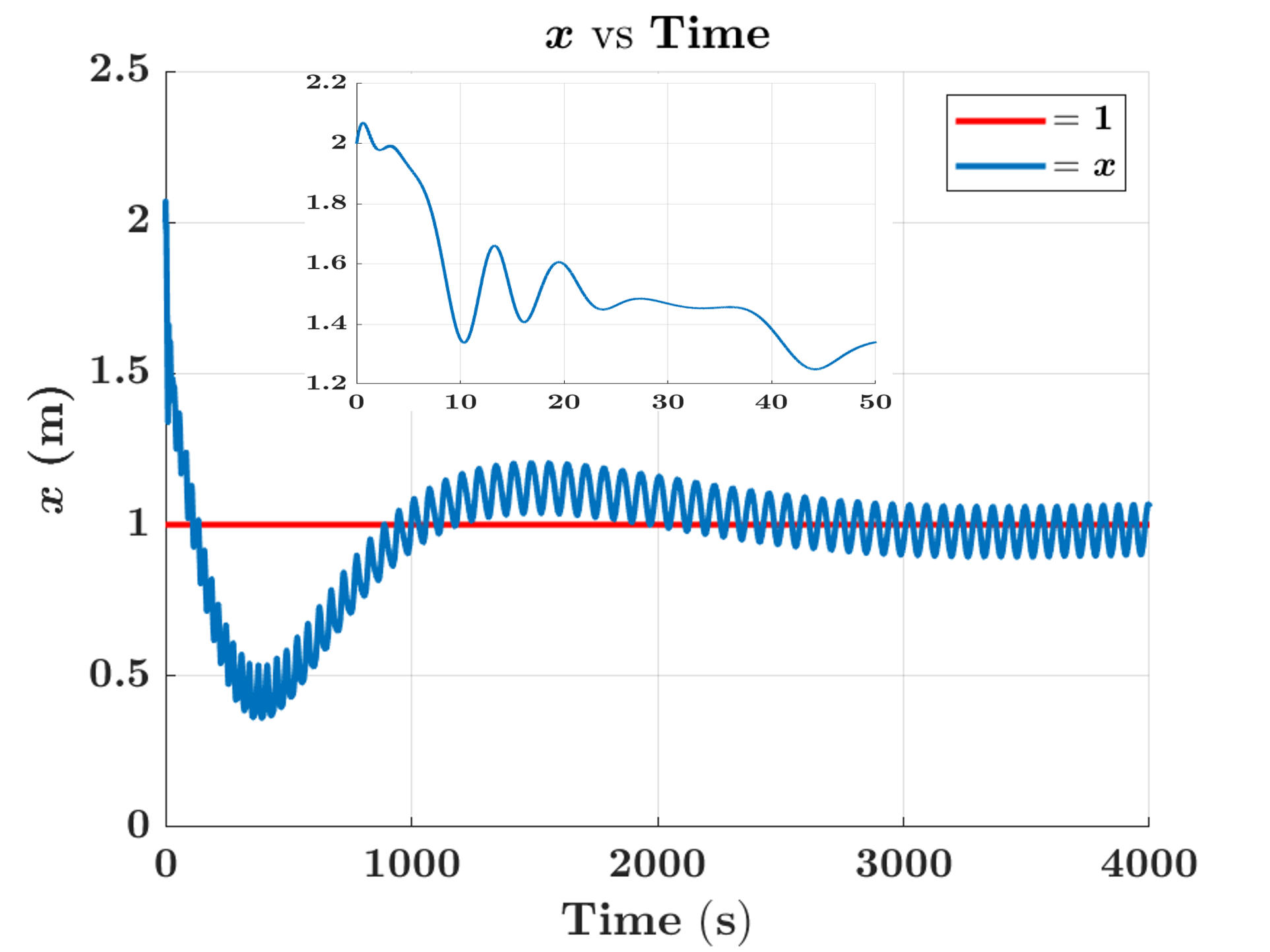}
    \caption{}
    \end{subfigure}\hfill
    \begin{subfigure}[b]{0.33\linewidth}
    \includegraphics[width=\linewidth]{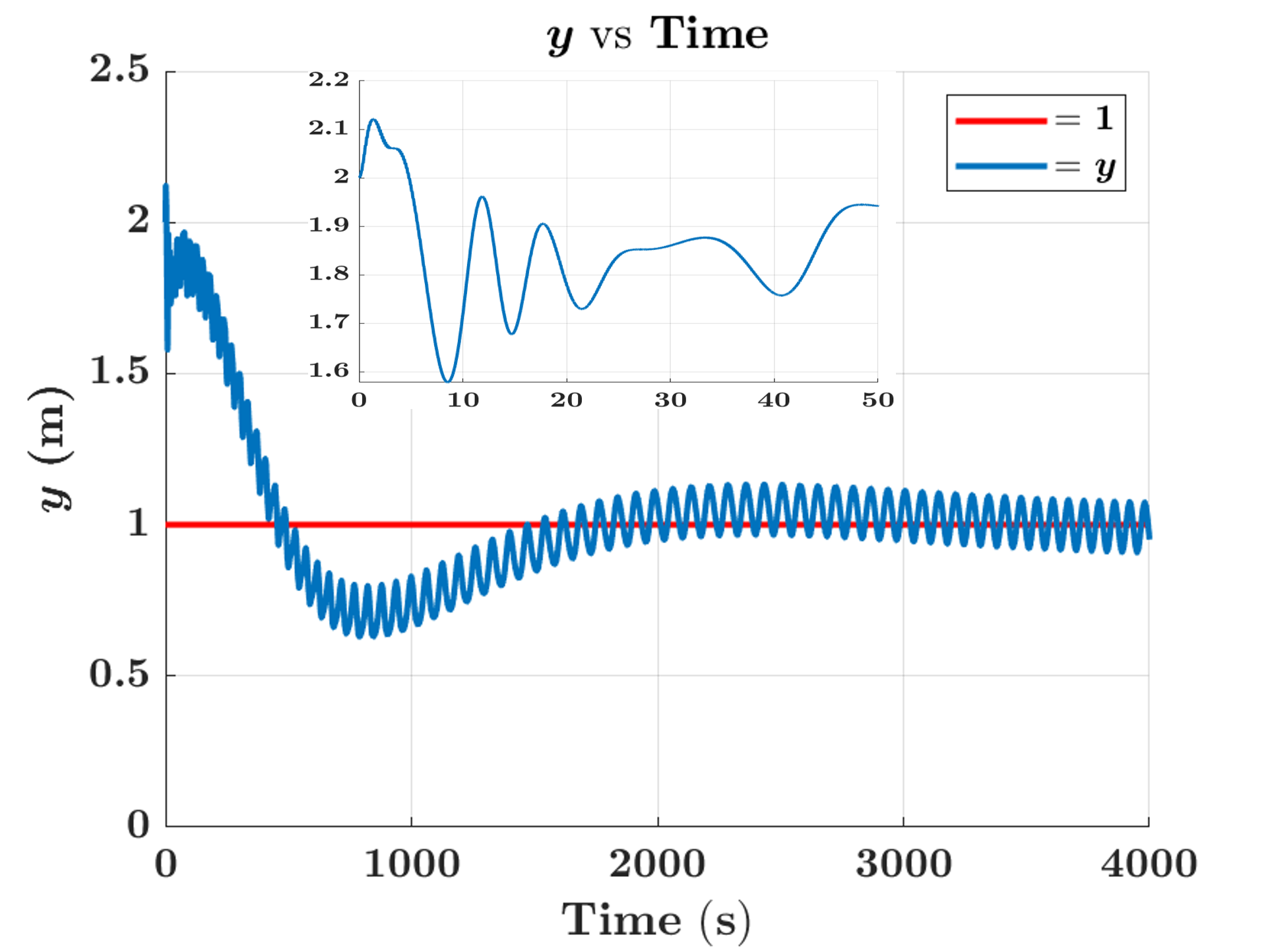}
    \caption{}
    \end{subfigure}\hfill
    \begin{subfigure}[b]{0.33\linewidth}
    \includegraphics[width=\linewidth]{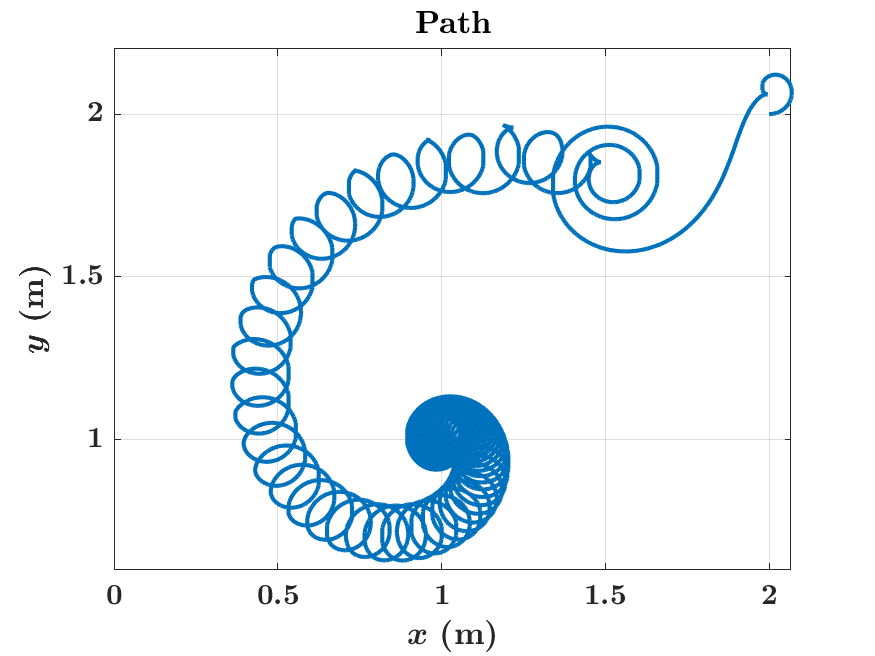}
    \caption{}
    \end{subfigure}\hfill
    \begin{subfigure}[b]{0.33\linewidth}
    \includegraphics[width=\linewidth]{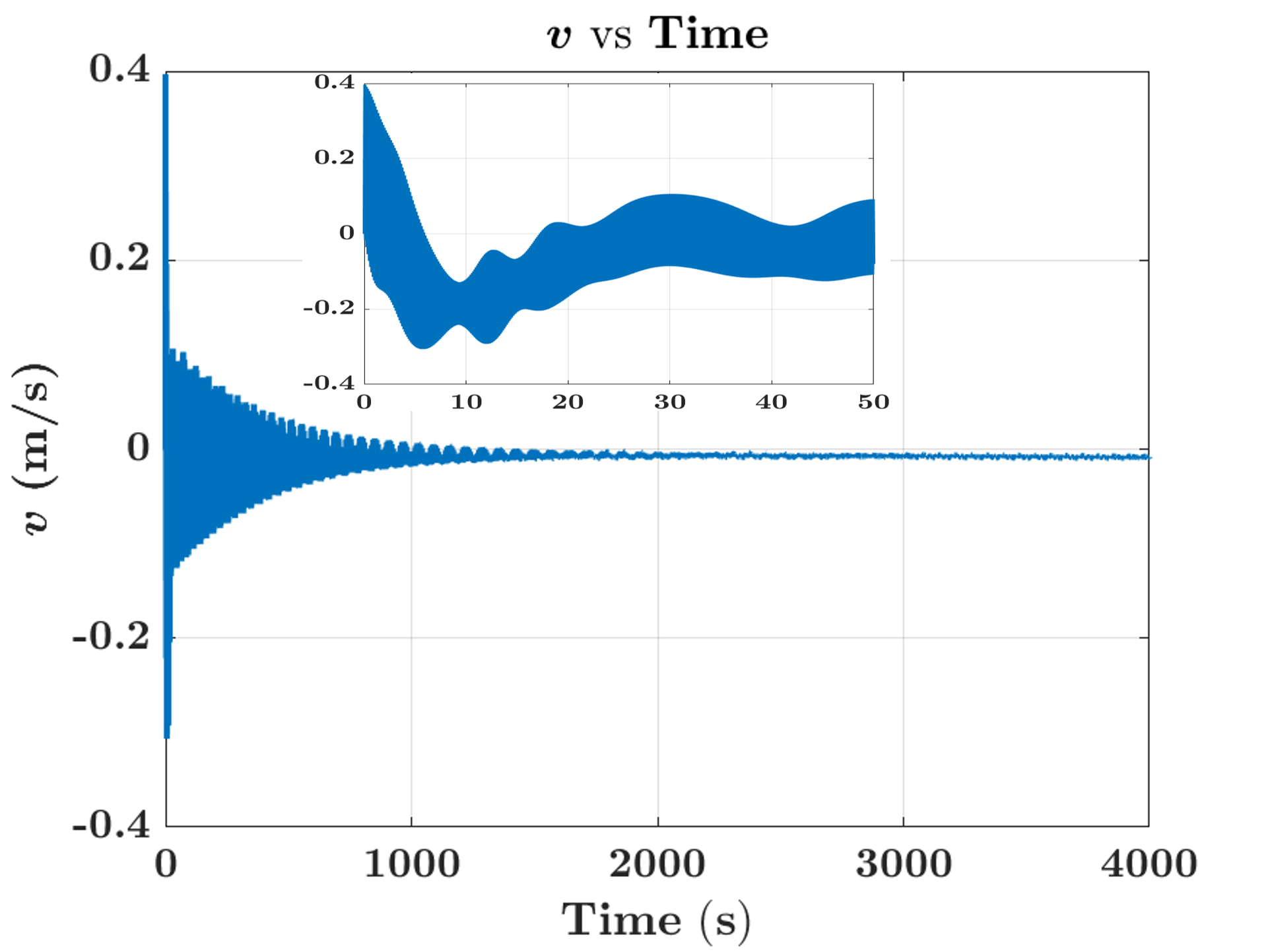}
    \caption{}
    \end{subfigure}\hfill
    \begin{subfigure}[b]{0.33\linewidth}
    \includegraphics[width=\linewidth]{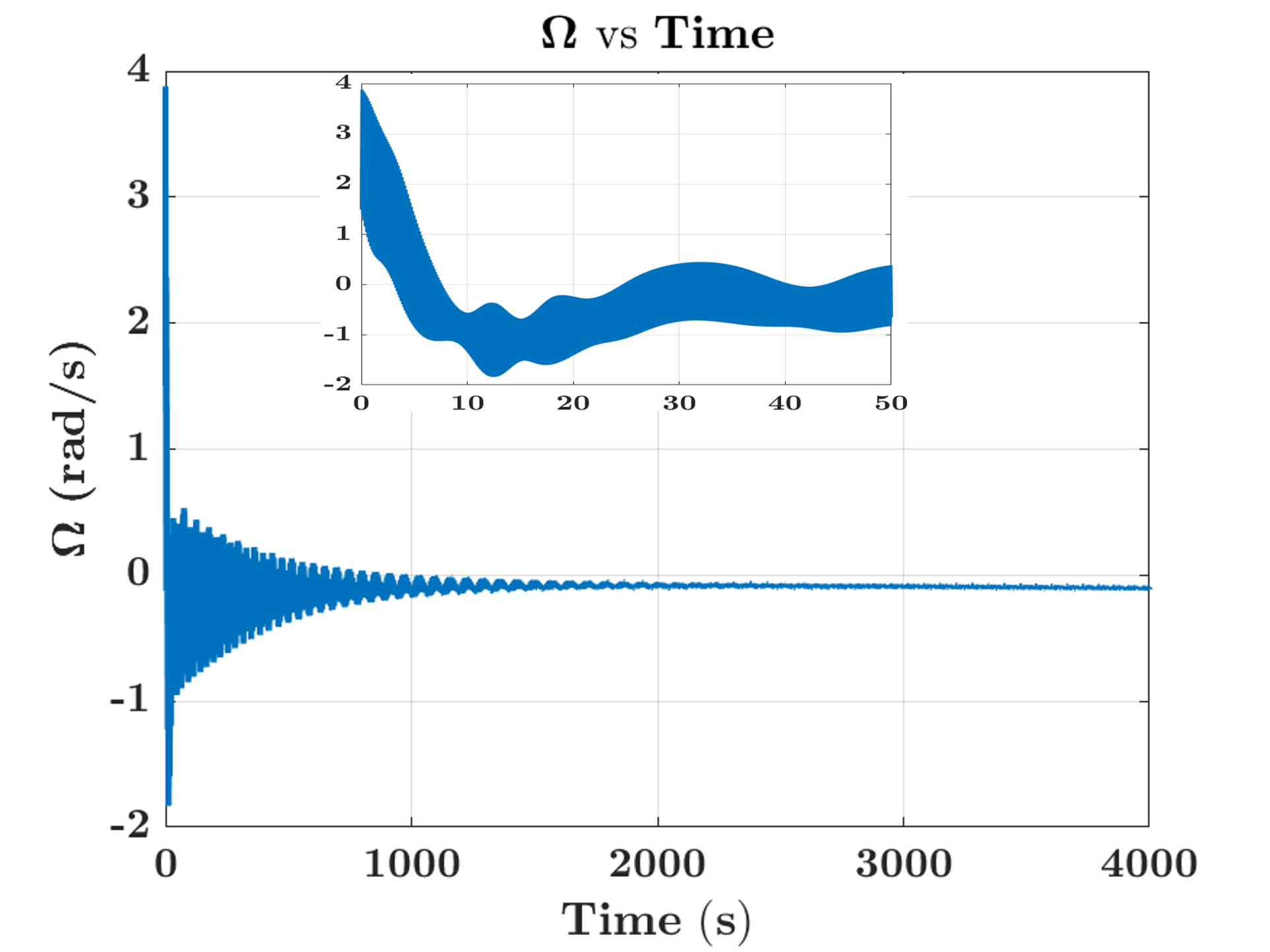}
    \caption{}
    \end{subfigure}\hfill
    \begin{subfigure}[b]{0.33\linewidth}
    \includegraphics[width=\linewidth]{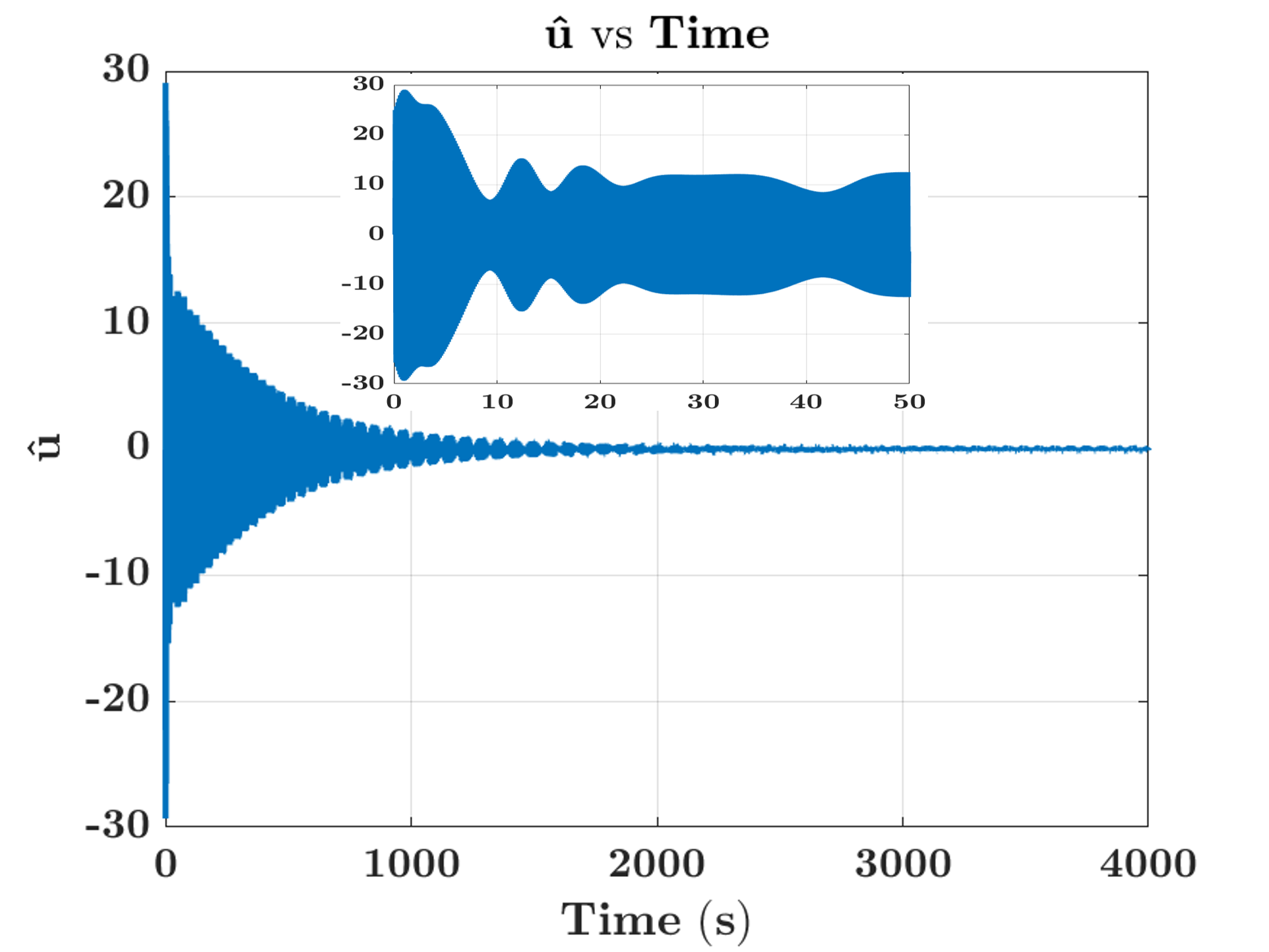}
    \caption{}
    \end{subfigure}\hfill
    \begin{subfigure}[b]{0.33\linewidth}
    \includegraphics[width=\linewidth]{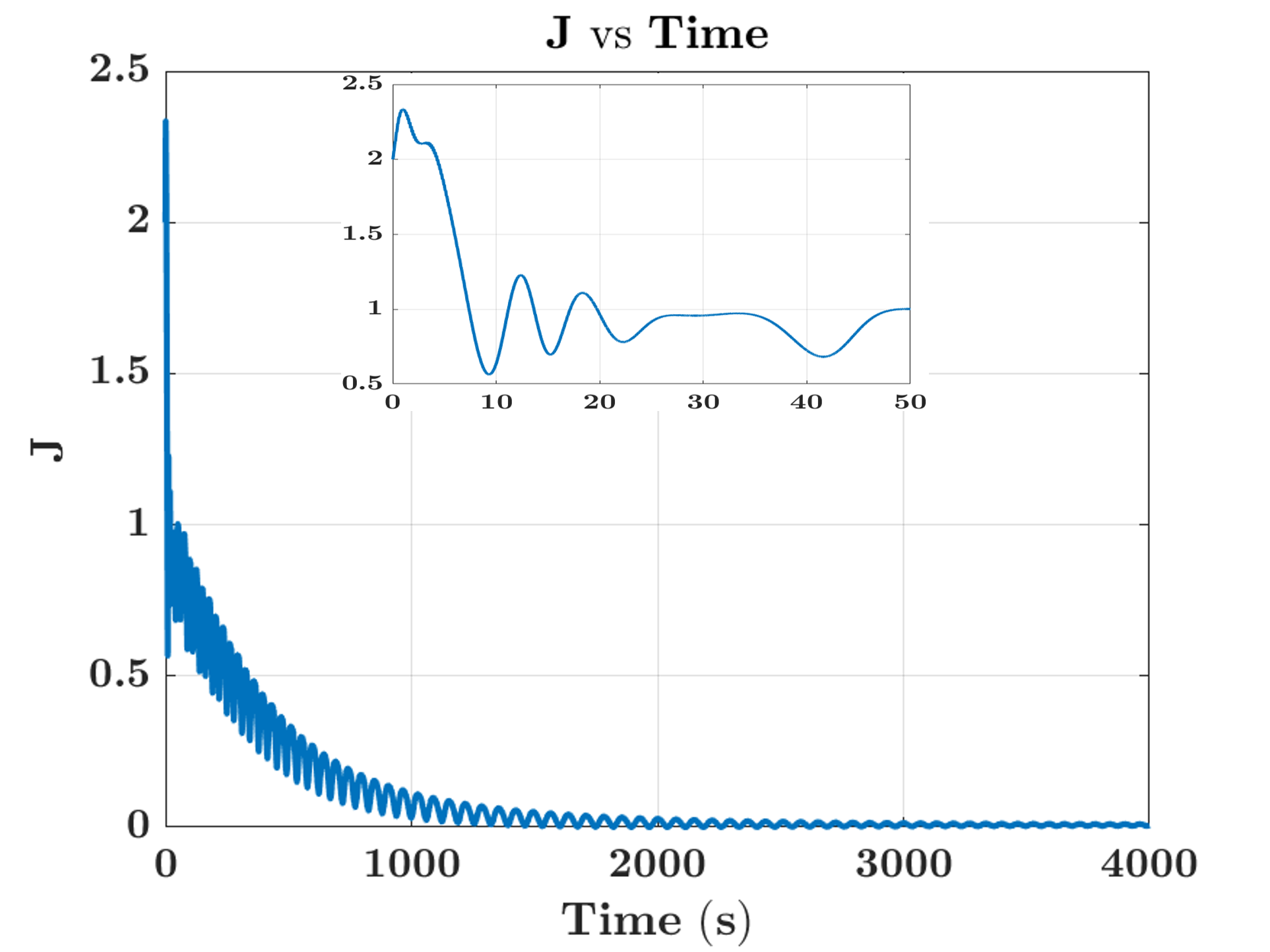}
    \caption{}
    \end{subfigure}
    \caption{Simulation Results for the unicycle model. (a) X position, $x$, (b) Y position, $y$, (c) Path of unicycle, (d) Unicycle velocity, $v$, (e) Unicycle angular velocity, $\Omega$, (f) Control estimate, $\hat{u}$, (g) Objective function, $J$.}
    \label{fig:unicycle_simulation_results}
\end{figure*}

\vspace{1cm}

\section{ESC-VS Response to Delay and Noise}\label{sec:robustness}
In this section, we investigate the response of ESC-VS control loop to output (measurement) delays and noise. This analysis, carried out on the ESC-VS controlled rigid body systems presented in Section \ref{sec:main}, is paramount to evaluate the robustness of the presented ESC-VS strategy to real-world scenarios. Numerical simulations are performed for all three platforms, for which we report the respective objective function response; however, for brevity, only the state response of one application is presented.
\begin{remark}
    To provide a fair evaluation of the ESC-VS system's response to noise and delay, the system and control parameters used throughout this section will be the same as those used in Section~\ref{sec:sim}. That is, the only differences will be in the parameters related to the delay or noise added to the system.
\end{remark}


\subsection{ESC-VS Response to Measurement Delay}\label{sec:delay}
The simulations presented in Section~\ref{sec:sim} are repeated by introducing fixed output delays of varying durations in the ESC-VS feedback loop. From the results shown in Fig.~\ref{fig:Delay}, it is observed that all ESC-VS-controlled systems under study tolerate output delays on the order of $10^{-2}$~s without noticeable loss of stability. As the delay magnitude increases, the convergence rate decreases and the size of the practical convergence neighborhood increases, which is consistent with the delayed feedback acting as an effective phase lag in the extremum-seeking adaptation loop. Furthermore, Fig.~\ref{fig:Delay_quad} shows that the effect of delay observed in the quadcopter objective function propagates directly to the attitude states. Similar behavior is observed for the state responses of the remaining applications and is omitted here for brevity. However, the codes and Simulink for all presented results can be found in \cite{elgohary2025github}.

\begin{figure*}[htbp]
    \centering
    \begin{subfigure}[b]{0.33\linewidth}
    \includegraphics[width=\linewidth]{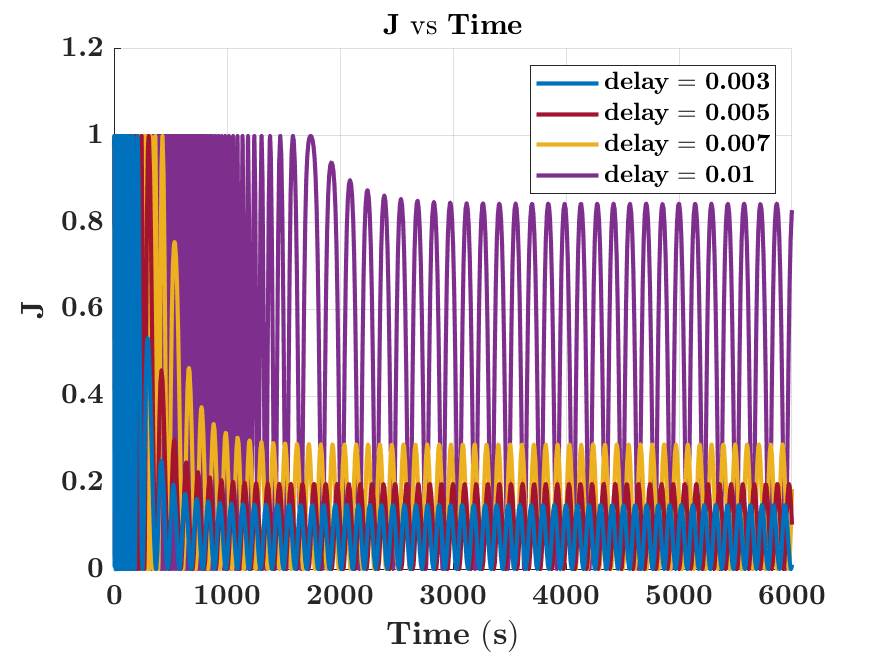}
    \caption{}
    \end{subfigure}\hfill
    \begin{subfigure}[b]{0.33\linewidth}
    \includegraphics[width=\linewidth]{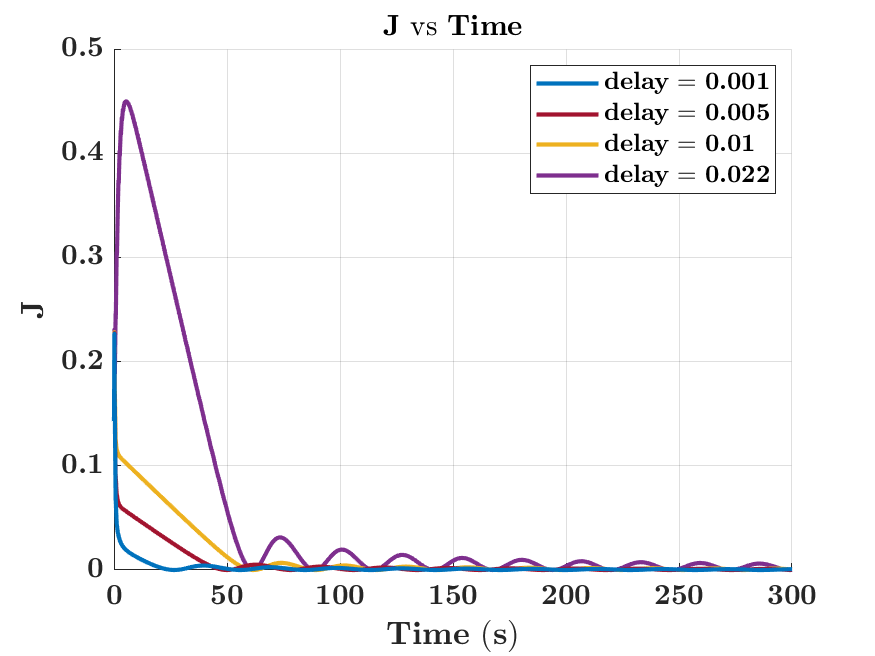}
    \caption{}
    \end{subfigure}\hfill
    \begin{subfigure}[b]{0.33\linewidth}
    \includegraphics[width=\linewidth]{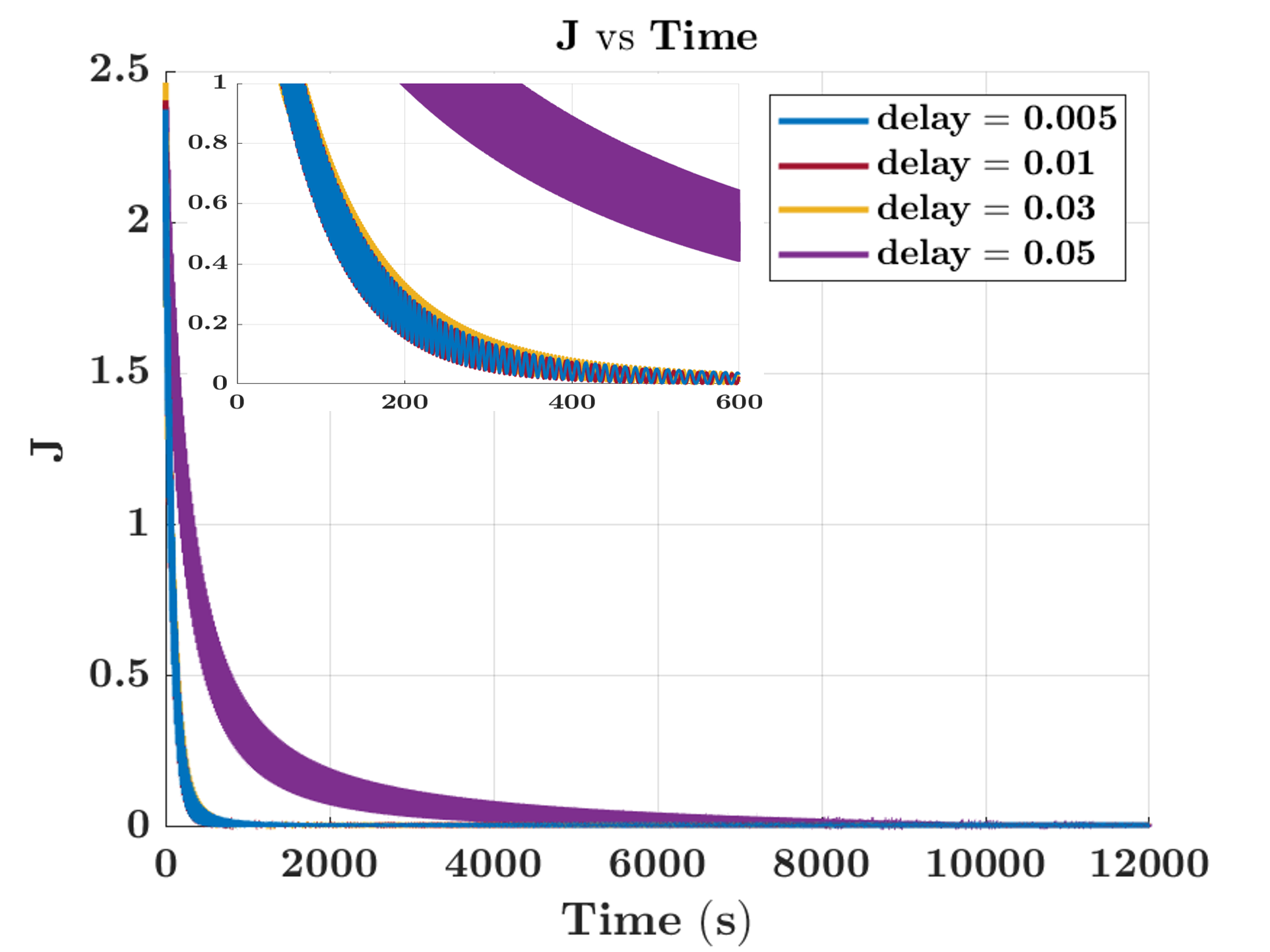}
    \caption{}
    \end{subfigure}
    \caption{Delay analysis. (a) Satellite objective function, (b) Quadcopter objective function, (c) Unicycle objective function.}
    \label{fig:Delay}
\end{figure*}
\begin{figure*}[htbp]
    \centering
    \begin{subfigure}[b]{0.33\linewidth}
    \includegraphics[width=\linewidth]{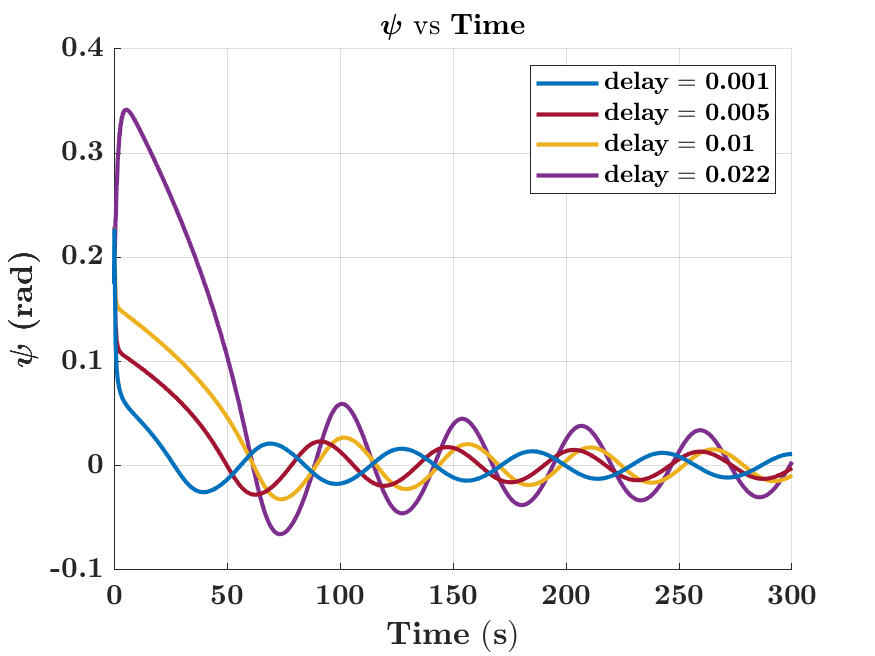}
    \caption{}
    \end{subfigure}\hfill
    \begin{subfigure}[b]{0.33\linewidth}
    \includegraphics[width=\linewidth]{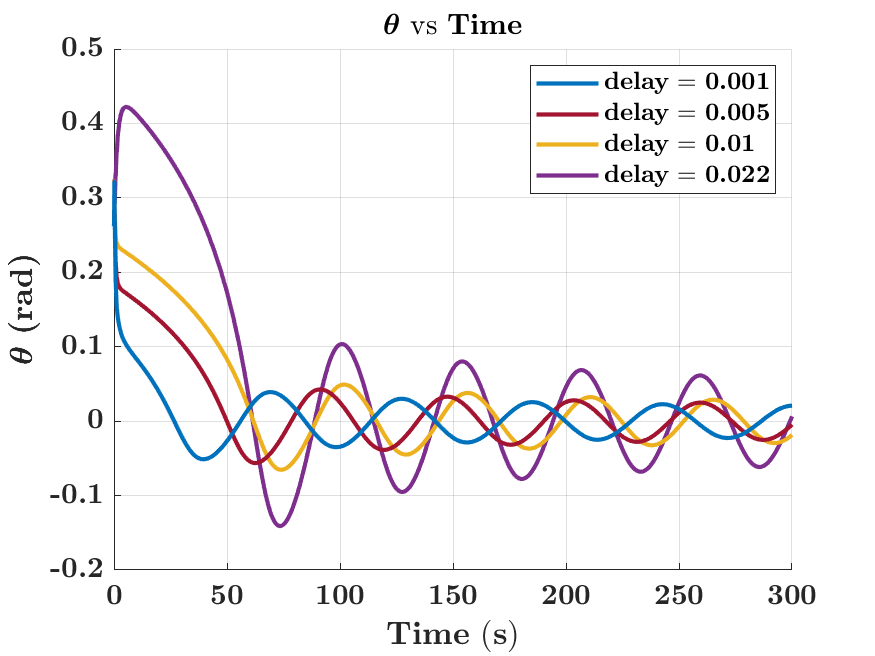}
    \caption{}
    \end{subfigure}\hfill
    \begin{subfigure}[b]{0.33\linewidth}
    \includegraphics[width=\linewidth]{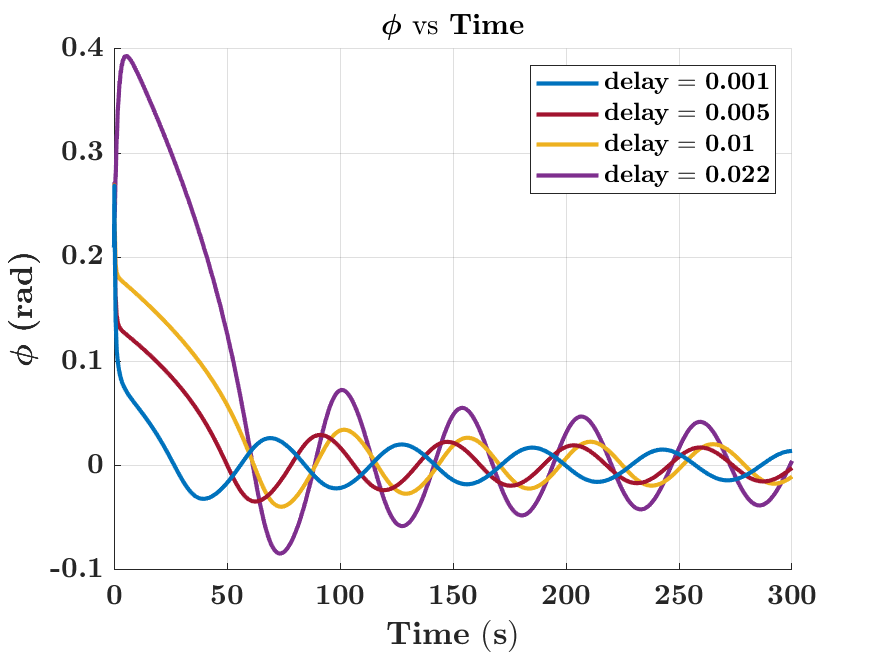}
    \caption{}
    \end{subfigure}
    \caption{Quadcopter state response to delays. (a) $\psi$, (b) $\theta$, (c) $\phi$.}
    \label{fig:Delay_quad}
\end{figure*}
Although ESC-VS displays encouraging results in handling delays, we note that, based on our trials, a control parameters tuning tailored to the presence of delays might improve the system performance. Nevertheless, a more directed and systematic approach would be to account for delays in the control design. Therefore, a future research direction will be developing ESC-VS with delay predictor similar to recent literature results such as \cite{oliveira2016extremum,doi:10.1137/1.9781611977356}. The results in \cite{oliveira2016extremum,doi:10.1137/1.9781611977356} are expected to be a good starting point to build upon for developing ESC-VS with delay predictor given their applicability to the classic structure of ESC systems, which present some structural similarities with ESC-VS in where the modulation and demodulation signals are applied but with different amplitude requirement.


\subsection{ESC-VS Response to Measurement Noise}\label{sec:noise}
Similar to the delay analysis in Section~\ref{sec:delay}, we repeat the simulations presented in Section~\ref{sec:sim} by injecting measurement noise into the objective function feedback. The noise is generated using the Simulink \emph{Random Number} block with uniform distribution, zero mean, and varying variance levels. A fixed random seed is used to ensure reproducibility across simulations.

Figure~\ref{fig:Noise} reports the objective function response for the satellite, quadcopter, and unicycle systems under different noise intensities. The results show that the ESC-VS framework maintains stability and bounded convergence behavior in the presence of measurement noise for all three platforms. As expected for extremum seeking–based methods, increasing noise variance leads to larger steady-state oscillations and a degradation in convergence accuracy; however, the systems remain practically stable and continue to operate in real time, tolerating non-negligible noise levels that causes at times over 25\% error in the objective function measurements.

To further illustrate the effect of noise on the system states, Figure~\ref{fig:Noise_sat} presents the satellite attitude response under noisy objective function measurements. The state trajectories exhibit bounded oscillations consistent with the objective function behavior, confirming that the noise-induced variations propagate coherently through the ESC-VS feedback loop. Similar trends were observed for the quadcopter and unicycle state responses and are omitted here for brevity. However, the codes and Simulink for all presented results can be found in \cite{elgohary2025github}

Overall, these results demonstrate that ESC-VS exhibits a meaningful degree of robustness to measurement noise, which is consistent with known properties of extremum seeking control schemes. While noise affects convergence accuracy, the method preserves stability and practical performance, motivating future work on enhanced filtering strategies and noise-aware ESC-VS designs.

\begin{figure*}[ht]
    \centering
    \begin{subfigure}[b]{0.33\linewidth}
    \includegraphics[width=\linewidth]{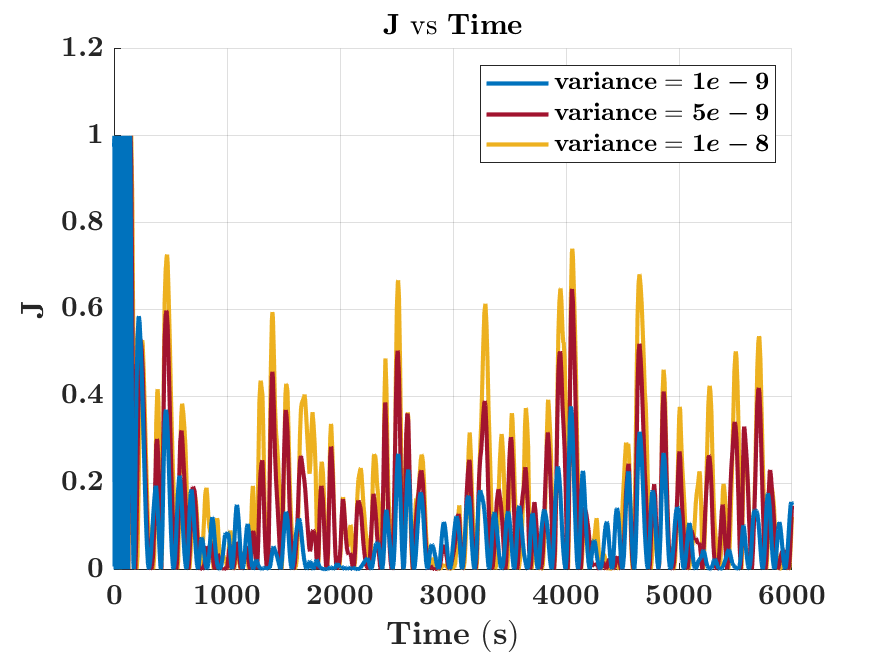}
    \caption{}
    \end{subfigure}\hfill
    \begin{subfigure}[b]{0.33\linewidth}
    \includegraphics[width=\linewidth]{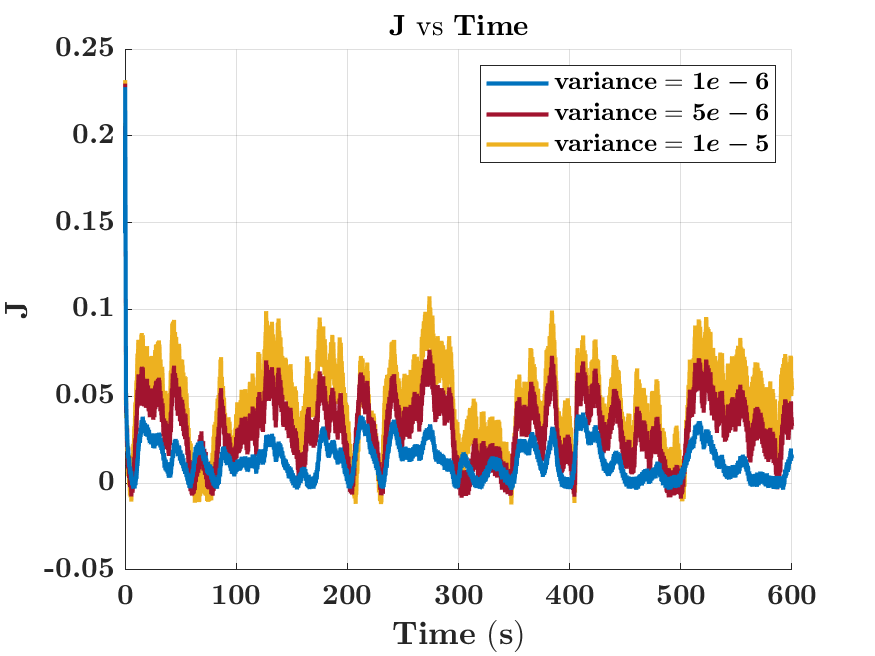}
    \caption{}
    \end{subfigure}\hfill
    \begin{subfigure}[b]{0.33\linewidth}
    \includegraphics[width=\linewidth]{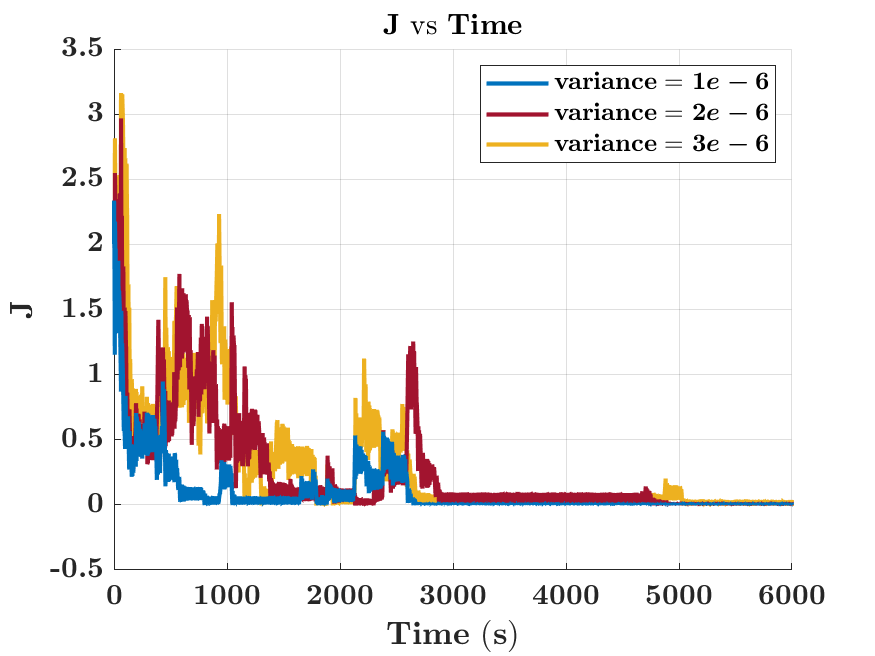}
    \caption{}
    \end{subfigure}
    \caption{Noise analysis. (a) Satellite objective function, (b) Quadcopter objective function, (c) Unicycle objective function.}
    \label{fig:Noise}
\end{figure*}
\begin{figure*}[ht]
    \centering
    \begin{subfigure}[b]{0.33\linewidth}
    \includegraphics[width=\linewidth]{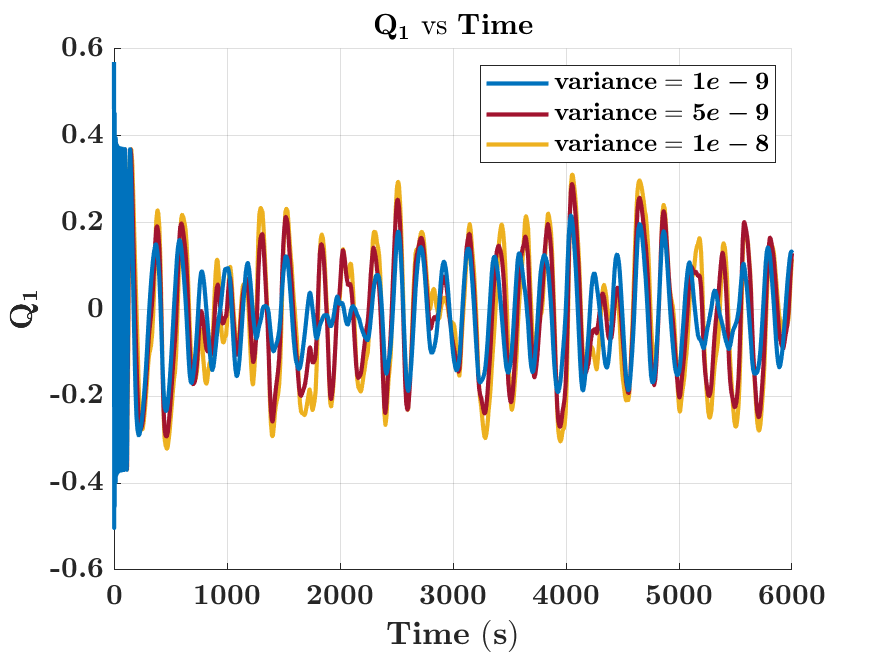}
    \caption{}
    \end{subfigure}\hfill
    \begin{subfigure}[b]{0.33\linewidth}
    \includegraphics[width=\linewidth]{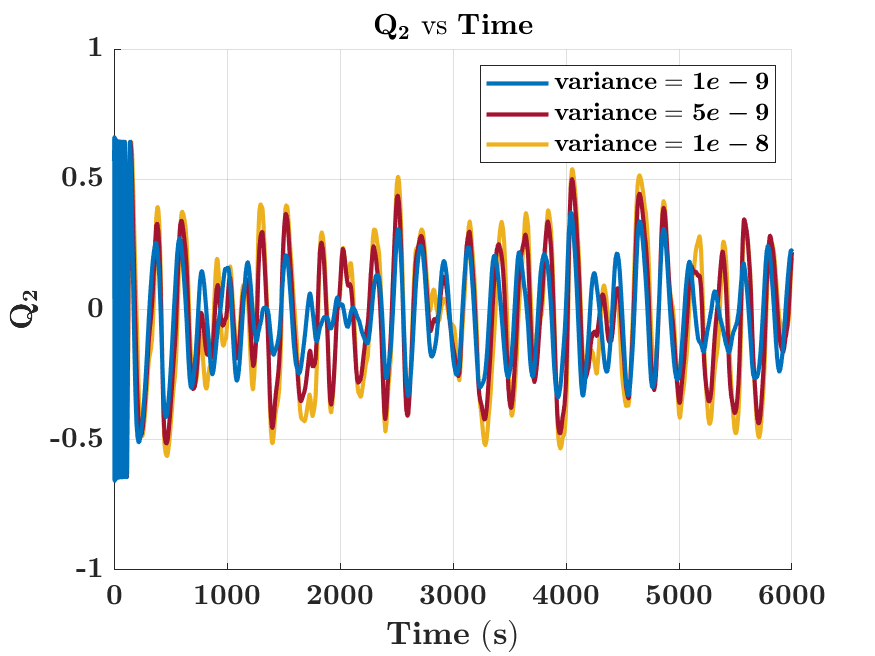}
    \caption{}
    \end{subfigure}\hfill
    \begin{subfigure}[b]{0.33\linewidth}
    \includegraphics[width=\linewidth]{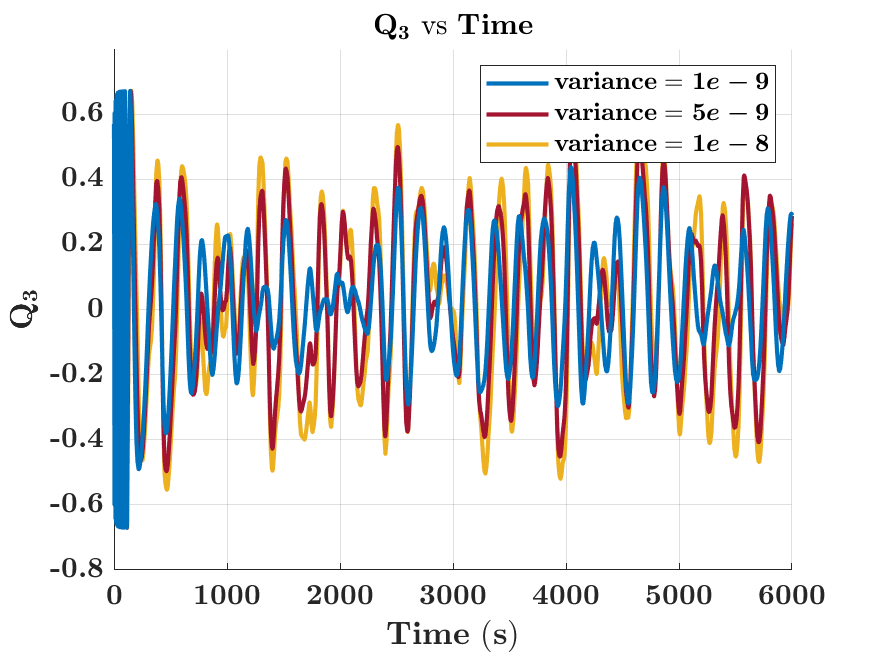}
    \caption{}
    \end{subfigure}
    \caption{Satellite state response to Noise. (a) $\textbf{Q}_1$, (b) $\textbf{Q}_2$, (c) $\textbf{Q}_3$.}
    \label{fig:Noise_sat}
\end{figure*}
\section{Conclusion}
In this paper, we succeeded in introducing the recent approach of extremum seeking for vibrational stabilization (ESC-VS) in \cite{elgohary2025letters} to a class of rigid body dynamics as depicted in Figure \ref{fig:esc_diagram}. We characterized the ESC-VS rigid body system and its stability. Moreover, we demonstrated its applicability to rigid body dynamical systems, including with measurement delays and noise, via three important applications: (1) satellite attitude dynamics, (2) quadcopter attitude dynamics, and (3) acceleration-controlled unicycle dynamics. We implemented case studies using numerical simulations for said three applications; results illustrate the effectiveness of the proposed ESC-VS rigid body system, which is able to operate in model-free, real-time fashion and using only one perturbation/vibrational signal for stabilization unlike literature results on extremum seeking approaches for rigid body dynamics. 

In the future, we aim at expanding the current ESC-VS method to broader classes of rigid body and mechanical systems, as well as more applications. Additionally, delay predictor/compensator and further filters for reversing noise effects can be investigated, including expanding the variation of averaging technique used to characterize the stability of ESC-VS to enable delays and noise.    

\bibliography{ifacconf}             

\end{document}